\newcommand{\C}{\mathbb{C}}
\newcommand{\CN}{\mathbb{C}^N}
\newcommand{\CNp}{\mathbb{C}^{N^\prime}}
\newcommand{\R}{\mathbb{R}}
\newcommand{\N}{\mathbb{N}}
\newcommand{\todo}[1]{}
\newlength{\extendaxesby}\setlength{\extendaxesby}{.4cm}
\DeclareMathOperator{\id}{id}
\DeclareMathOperator{\real}{Re}
 \newtheorem{thm}{Theorem}
\newtheorem{theorem}[thm]{Theorem}
\newtheorem{lem}[thm]{Lemma}
\newtheorem{lemma}[thm]{Lemma}
\newtheorem{cor}[thm]{Corollary}
\theoremstyle{definition}
\newtheorem{definition}[thm]{Definition}
\newtheorem{remark}[thm]{Remark}
\begin{document}

\title[Infinitesimal and local rigidity of mappings]{Infinitesimal and local rigidity \\
 of mappings of CR manifolds}
\author{Giuseppe della Sala}
\address{Department of Mathematics, American University of Beirut (AUB)}
\email{gd16@aub.edu.lb}
\author{Bernhard Lamel}
\address{Fakult\"at f\"ur Mathematik, Universit\"at Wien}
\email{bernhard.lamel@univie.ac.at}
\author{Michael Reiter}
\address{Fakult\"at f\"ur Mathematik, Universit\"at Wien}
\email{m.reiter@univie.ac.at}

\subjclass[2010]{Primary 32H02; Secondary 32V40, 58E40}
\thanks{The first author was supported by the FWF project P24878-N25, and would also like to
thank the Center for Advanced Mathematical Sciences (CAMS) at AUB. The second author was supported by the FWF-Project I382 and QNRF-Project NPRP 7-
511-1-098. The third author was supported by the FWF-Project P28873}

\begin{abstract}
A holomorphic mapping $H$ between two real-analytic CR manifolds $M$ and $M'$ is said to be locally rigid if any other holomorphic map $F\colon M \to M'$ which is close enough to $H$ is obtained by composing $H$ with suitable automorphisms of $M$ and $M'$. With the aim of reducing the local rigidity problem to a linear one,  we provide  sufficient infinitesimal conditions. Furthermore we study some topological properties of the action of the automorphism group on the space of nondegenerate mappings from $M$ to $M'$. 
\end{abstract}

\maketitle

\section{Introduction}
\label{intro}

Let $M \subset \C^N$ be a (real-analytic) generic submanifold. We define ${\rm Aut}_0 (M)$ to be the group of the germs $\sigma$ of biholomorphic maps $\mathbb C^N\to \mathbb C^N$, defined around $0$, such that $\sigma(0) = 0$ and $\sigma(M)\subset M$. We denote the Lie algebra of ${\rm Aut}_0 (M)$ by $\mathfrak{hol}_0(M)$.

Let now $M$ and $M'$ be germs of generic real-analytic CR submanifolds in $\C^N$ and $\C^{N'}$ respectively, and let $\mathcal H(M,M')$ denote the space of germs of holomorphic mappings which send $M$ into $M'$.  The group  $G=  {\rm Aut}_0 (M) \times {\rm Aut}_0 (M') $, which we call the \emph{isotropy group}, acts on $\mathcal H(M,M')$ via $H \mapsto \sigma' \circ H \circ \sigma^{-1}$, where $(\sigma, \sigma')\in G$ and $H \in \mathcal H(M,M')$. If we endow all of these sets with their natural (inductive limit) topologies, they become topological spaces and groups, respectively. We are interested in studying the topological properties of this (continuous) group action. More precisely we would like to continue our study of local rigidity of mappings, a notion we introduced in \cite{dSLR15}: we say a map $H\in \mathcal H(M,M')$ is locally rigid if it projects to an isolated point in the quotient $\mathcal H(M,M') / G$ (for a formal definition, see Definition \ref{locrig}).

Our aim is to provide linear -- and thus easier to compute --
sufficient conditions for local rigidity. In order to state a criterion in this direction, 
we say a holomorphic section $V$ of $T^{1,0}(\mathbb C^{N'})|_{H(\mathbb C^N)}$, 
which vanishes at $0$, is an \emph{infinitesimal deformation} of $H \in \mathcal H(M,M')$ 
if $\real V$ is tangent to $M'$ along $H(M)$ (for the formal definition see Definition \ref{def:infdef}). We denote the set of infinitesimal deformations of $H$ by $\mathfrak {hol}_0 (H)$; it forms a real vector space.

We are particularly interested in sets of mappings satisfying certain generic nondegeneracy conditions introduced in \cite{La}: in Definition \ref{defNondeg} below we formally introduce the set of finitely nondegenerate mappings.

We are now going to state our main results, which are built on the approach and techniques of our recent work \cite{dSLR15}. In this paper we exploit our method of infinitesimal deformations to study more general situations. The first result is a generalization of Theorem~1 of \cite{dSLR15}.

\begin{theorem}\label{infTrivial}
Let $M$ be a germ of a generic minimal real-analytic submanifold through $0$ in $\C^N$, and $M'$ be 
 a germ of a generic real-analytic submanifold in $\C^{N'}$. Let $H\in \mathcal H(M,M') $ be a germ 
 of a finitely nondegenerate map satisfying
  $\dim_{\mathbb R} \mathfrak {hol}_0 (H) = 0$. Then $H$ is an isolated point in $\mathcal H(M,M')$, and  in particular, $H$ is locally rigid. 
 \end{theorem}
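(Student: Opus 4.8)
The plan is to realize a neighborhood of $H$ in $\mathcal{H}(M,M')$ as (the image of) a real-analytic subvariety of a finite-dimensional jet space, to identify $\mathfrak{hol}_0(H)$ with the Zariski tangent space of this variety at the jet of $H$, and then to conclude by the implicit function theorem. First I would invoke the jet parametrization furnished by finite nondegeneracy together with minimality (the results of \cite{La}, as exploited in \cite{dSLR15}): since $M$ is minimal and $H$ is finitely nondegenerate (Definition \ref{defNondeg}), there is an integer $k_0$ such that every $F \in \mathcal{H}(M,M')$ close enough to $H$ is uniquely determined by, and depends real-analytically on, its $k_0$-jet $j^{k_0}_0 F$ at the origin; in particular $F \mapsto j^{k_0}_0 F$ is a continuous injection on a neighborhood of $H$. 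Writing the membership condition $F(M)\subset M'$ as $\rho'(F(z),\overline{F(z)})=0$ for $z\in M$, with $\rho'$ a defining vector for $M'$, pulling this back along a CR parametrization of $M$, and using the jet parametrization to eliminate $F$ in favor of its jet, one obtains a finite system of real-analytic equations $\Phi(j^{k_0}_0 F)=0$ whose zero set $\mathcal V$ is a real-analytic subvariety of the jet space, containing $j^{k_0}_0 H$ and locally carrying the image of a neighborhood of $H$.

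The key step is to identify $\mathfrak{hol}_0(H)$ with the Zariski tangent space $\ker D\Phi(j^{k_0}_0 H)$ of $\mathcal V$ at $j^{k_0}_0 H$. Linearizing the tangency condition $\rho'(F,\overline F)|_M=0$ at $F=H$ in a direction $V$ yields $2\,\real\bigl(\rho'_w(H,\overline H)\cdot V\bigr)=0$ along $H(M)$, which is exactly the requirement (Definition \ref{def:infdef}) that $\real V$ be tangent to $M'$ along $H(M)$; hence the solutions of the linearized system are precisely the infinitesimal deformations of $H$. One must then verify that, under the jet parametrization, this correspondence is an isomorphism onto $\ker D\Phi(j^{k_0}_0 H)$, i.e. that every solution of the linearized equations arises from, and is determined by, its $k_0$-jet — once more via the finite nondegeneracy of $H$ and the minimality of $M$.

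Granting this identification, the hypothesis $\dim_{\R}\mathfrak{hol}_0(H)=0$ says that the Zariski tangent space of $\mathcal V$ at $j^{k_0}_0 H$ vanishes. Thus the differentials of the components of $\Phi$ span the full cotangent space at $j^{k_0}_0 H$, and by the implicit function theorem $j^{k_0}_0 H$ is an isolated point of $\mathcal V$. Since $F\mapsto j^{k_0}_0 F$ is a continuous injection near $H$ with image in $\mathcal V$, any sequence $F_j\to H$ in $\mathcal{H}(M,M')$ has $j^{k_0}_0 F_j\to j^{k_0}_0 H$, forcing $j^{k_0}_0 F_j=j^{k_0}_0 H$ and hence $F_j=H$ eventually; therefore $H$ is isolated in $\mathcal{H}(M,M')$. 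An isolated point of $\mathcal{H}(M,M')$ projects \emph{a fortiori} to an isolated point of the quotient $\mathcal{H}(M,M')/G$, so $H$ is locally rigid in the sense of Definition \ref{locrig}.

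I expect the main obstacle to be the surjectivity half of the identification in the second paragraph: checking that a genuine deformation linearizes to an infinitesimal deformation is routine, but proving that $\mathfrak{hol}_0(H)$ exhausts the \emph{full} Zariski tangent space $\ker D\Phi(j^{k_0}_0 H)$ — rather than merely the space of tangent vectors to curves lying in $\mathcal V$ — is what upgrades the conclusion from ``no first-order deformations'' to the strong statement that $H$ is an isolated point. It is precisely here that the jet-determinacy coming from minimality and finite nondegeneracy, and the real-analyticity of $\Phi$, must be used with care.
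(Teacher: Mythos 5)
Your overall skeleton (reduce to a finite-dimensional real-analytic set of jets via the jet parametrization, argue linearly at $\Lambda_0=j_0^{\mathbf t k_0}H$, and pass from isolation in $\mathcal H(M,M')$ to local rigidity) parallels the paper's, and your linearization computation, the implicit-function-theorem step, and the final reduction to Definition \ref{locrig} are all sound. The gap is exactly at the step you yourself flag as the main obstacle, and it is genuine rather than routine: your argument needs the inclusion $\ker D\Phi(\Lambda_0)\subseteq\{\,j_0^{\mathbf t k_0}V : V\in\mathfrak{hol}_0(H)\,\}$, and this does not follow from the nonlinear jet parametrization. Concretely, the defining functions $c^j_k$ in \eqref{defEquationJetParam} are produced by substituting $\Phi_j(\cdot,\Lambda)$ into the mapping equation \eqref{mapeq}, so by the chain rule their differentials at $\Lambda_0$ factor through $D_\Lambda\Phi_j(\cdot,\Lambda_0)$; hence \emph{every} vector $v$ with $D_\Lambda\Phi_j(\cdot,\Lambda_0)v\equiv 0$ lies in the kernel of the linearized defining system, whether or not it corresponds to an infinitesimal deformation. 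To exclude such $v$ you would need injectivity of $D_\Lambda\Phi_j(\cdot,\Lambda_0)$ on the whole jet space, i.e.\ the reproduction identity $j_0^{\mathbf t k_0}\Phi_j(\cdot,\Lambda)=\Lambda$ for all $\Lambda$ near $\Lambda_0$ in $U_j$; but Theorem \ref{jetparam} gives that identity only on $A_j$, and $\Lambda_0$ may be a singular point of $A_j$, where Zariski tangent vectors need not be tangent to anything inside $A_j$. More fundamentally, for \emph{real} analytic varieties the kernel of the differential of a chosen defining system is a defining-system-dependent object that can wildly overestimate the geometry (the system $x^2+y^2=0$ in $\R^2$ has isolated zero set but identically vanishing differential), so ``Zariski tangent space $=$ space of infinitesimal deformations'' cannot be waved through; making it true requires augmenting the defining system (e.g.\ by the equations $j_0^{\mathbf t k_0}\Phi_j(\cdot,\Lambda)-\Lambda=0$) together with a jet parametrization theorem for the \emph{linearized} equation, neither of which appears in your outline.

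It is worth seeing that the paper's proof is structured precisely to avoid this two-sided identification. It uses only the easy one-sided inclusion (Lemma \ref{contain}): for a smooth submanifold $X\subseteq A_j$, the differential $D\Phi_j(\Lambda)$ maps $T_\Lambda X$ injectively into $\mathfrak{hol}_0(\Phi_j(\Lambda))$ --- injectivity being available there because the reproduction identity does hold along $A_j$. Combined with upper semicontinuity of $\dim_\R\mathfrak{hol}_0$ (Lemma \ref{semicont}), which is needed because such $X$ need not pass through $\Lambda_0$, this shows that no positive-dimensional submanifold of $A_j$ meets a small neighborhood of $\Lambda_0$ (Lemma \ref{dim10}). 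The local structure theory of real-analytic sets (stratification into finitely many submanifolds) then forces $A_j$ to be discrete near $\Lambda_0$, and the homeomorphism $\Phi_j\colon A_j\to\mathcal F_{k_0,j}$ of Lemma \ref{PhiHomo}, together with openness of $\mathcal F_{k_0}$ in $\mathcal H(M,M')$, yields isolation of $H$. If you want to keep your route, the cleanest repair is either to replace the implicit function theorem by this stratification argument, or to prove the jet parametrization for solutions of the linearized system \eqref{infdef} (as in Section 5 of \cite{dSLR15}) and add the reproduction equations to the defining system before computing its kernel.
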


In the next result we are going to relax the assumption $\dim_{\mathbb R} \mathfrak {hol}_0 (H) = 0$. In order to do so we will restrict to the case when $M \subset \C^N$ and $M' \subset \C^{N'}$ are strictly pseudoconvex hypersurfaces, so that $M$ and $M'$ have CR dimension $n=N-1$ and $n'=N'-1$ respectively. Moreover, since the embeddings of spheres have been studied extensively \cites{We, Fa2, Da, Hu, HJ, Ji, Le, Re2, Re3} (see \cite{dSLR15} for a more thorough discussion and additional references), we assume that $M$ or $M'$ is not biholomorphically equivalent to a sphere.

The following result is a generalization of Theorem 2 of \cite{dSLR15} in the setting of strictly pseudoconvex hypersurfaces.

\begin{theorem}\label{suffcon2intro}
Let $M \subset\C^N$ and $M' \subset \C^{N'}$ be germs of strictly pseudoconvex real-analytic hypersurfaces through $0$, where at least one of $M$ or $M'$ is not spherical. If $H\in \mathcal H(M,M') $ is a germ of a $2$-nondegenerate map that satisfies
  $\dim_{\mathbb R} \mathfrak {hol}_0 (H) = \dim_{\mathbb R}\mathfrak{hol}_0(M')$, then $H$ is locally rigid. 
\end{theorem}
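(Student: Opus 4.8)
The plan is to reduce the statement to a finite-dimensional infinitesimal assertion and then upgrade it to a local one. First I would use the $2$-nondegeneracy of $H$ to invoke a jet parametrization: there is an integer $k$ such that the germ of $\mathcal H(M,M')$ at $H$ is realized, via $F\mapsto j_0^k F$, as the germ of a real-analytic subvariety $\mathcal V_H$ of a finite-dimensional jet space, with $H$ corresponding to $j_0^k H$. Moreover the infinitesimal deformations are precisely the Zariski tangent directions of this variety, i.e. one has a canonical identification $T_{H}\mathcal V_H \cong \mathfrak{hol}_0(H)$; this is the content of the deformation machinery already developed, and it is where minimality/strict pseudoconvexity of $M$ enters. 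With this dictionary, $H$ is locally rigid precisely when the orbit $G\cdot H$ is a neighbourhood of $H$ inside $\mathcal V_H$.

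Next I would compute the tangent space to the orbit. By strict pseudoconvexity the isotropy group $G=\Aut_0(M)\times\Aut_0(M')$ is a finite-dimensional Lie group with Lie algebra $\mathfrak{hol}_0(M)\oplus\mathfrak{hol}_0(M')$, and differentiating the action $H\mapsto\sigma'\circ H\circ\sigma^{-1}$ at the identity yields
\[
\Phi\colon \mathfrak{hol}_0(M)\oplus\mathfrak{hol}_0(M')\longrightarrow\mathfrak{hol}_0(H),\qquad \Phi(X,X')=X'\circ H-dH(X),
\]
whose image is $T_H(G\cdot H)\subseteq T_H\mathcal V_H=\mathfrak{hol}_0(H)$. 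The restriction $r:=\Phi(0,\,\cdot\,)$ sends $X'$ to $X'|_{H(\C^N)}$, which indeed lies in $\mathfrak{hol}_0(H)$ since $X'$ is tangent to $M'$ and vanishes at $0$. The strategy is to show $r$ is injective: as $\dim_{\mathbb{R}}\mathfrak{hol}_0(H)=\dim_{\mathbb{R}}\mathfrak{hol}_0(M')$ by hypothesis, injectivity of $r$ makes it an isomorphism, whence already $T_H(G\cdot H)\supseteq \image r=\mathfrak{hol}_0(H)$; that is, the orbit is infinitesimally full.

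The injectivity of $r$ is the heart of the matter. Suppose $X'\in\mathfrak{hol}_0(M')$ restricts to $0$ on $H(\C^N)$; equivalently the one-parameter subgroup it generates fixes $H(\C^N)$ pointwise, so it lies in the stabilizer of $H$, and I would argue this forces $X'=0$. In the equidimensional case $N=N'$ this is immediate, since $H(\C^N)$ is then open and $X'$ is holomorphic. In positive codimension the argument must exploit that $H(M)$ is a strictly pseudoconvex, hence minimal, CR submanifold of $M'$ which, by the finite nondegeneracy of $H$, is not contained in any proper complex-analytic subset carrying a nontrivial infinitesimal automorphism of $M'$; here the non-sphericity hypothesis is exactly what rules out the exceptional large symmetries present for the ball, which would otherwise produce such vanishing fields. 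Equivalently, one shows that the stabilizer $\{(\id,\sigma')\in G:\sigma'\circ H=H\}$ is discrete. I expect this step — controlling the stabilizer of $H$ in the target isotropy in arbitrary codimension — to be the main obstacle, and the one for which strict pseudoconvexity, non-sphericity and $2$-nondegeneracy are all indispensable.

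Finally I would upgrade the infinitesimal statement to a local one. Because $G$ is a finite-dimensional Lie group acting real-analytically on the finite-dimensional variety $\mathcal V_H$, the orbit $G\cdot H$ is an immersed real-analytic submanifold with $\dim(G\cdot H)=\dim_{\mathbb{R}}\image\Phi=\dim_{\mathbb{R}}\mathfrak{hol}_0(H)=\dim_{\mathbb{R}}T_H\mathcal V_H$. Since the Zariski tangent dimension dominates the local dimension of a variety, $\dim\mathcal V_H\le\dim_{\mathbb{R}} T_H\mathcal V_H=\dim(G\cdot H)$; as the orbit is contained in $\mathcal V_H$, the two dimensions must coincide, $\mathcal V_H$ is smooth at $H$, and the orbit is open in $\mathcal V_H$. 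Translating back through the jet parametrization, every $F\in\mathcal H(M,M')$ sufficiently close to $H$ lies in $G\cdot H$, so $H$ projects to an isolated point of $\mathcal H(M,M')/G$, which is exactly the asserted local rigidity.
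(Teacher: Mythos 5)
Your overall skeleton---pass to a finite-dimensional jet variety via the jet parametrization, exhibit the isotropy orbit as a submanifold of the correct dimension inside it, and conclude by a dimension count---parallels the paper's strategy, but the two steps you lean on most heavily are exactly the ones the paper's proof is engineered to avoid, and as stated they are genuine gaps.

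First, the ``canonical identification'' $T_H\mathcal V_H\cong\mathfrak{hol}_0(H)$ is not part of the machinery developed here or in \cite{dSLR15}. What is actually proved (Lemma \ref{contain}) is only that $D\Phi_j(\Lambda_0)$ maps tangent spaces of submanifolds $X\subset A_j$ \emph{into} $\mathfrak{hol}_0(\Phi_j(\Lambda_0))$; this yields the bound of Lemma \ref{dim10} on dimensions of submanifolds contained in $A_j$, not a computation of the Zariski tangent space of $A_j$. Note that the identity $j_0^{\mathbf t k_0}\circ\Phi_j=\id$, which is what makes $D\Phi_j$ injective, holds only \emph{on} $A_j$, so it can be differentiated only along directions tangent to curves inside $A_j$; for a general Zariski tangent vector neither injectivity nor surjectivity of the induced map to $\mathfrak{hol}_0(H)$ is available without substantial extra work. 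This matters because your final step truly needs the Zariski statement: knowing merely that every submanifold of $\mathcal V_H$ near $H$ has dimension $\le\ell$ and that $G\cdot H$ is an $\ell$-dimensional immersed submanifold does \emph{not} make the orbit open in $\mathcal V_H$. (The union of two lines crossing at the origin contains a line through the origin, and every submanifold of it has dimension $\le 1$, yet the line is not a neighborhood of the origin in the union; the obstruction is visible precisely in the Zariski tangent space, which is two-dimensional there.) The paper's substitute for this missing step is the pair properness/freeness: by Lemmas \ref{proact}, \ref{proact2} and \ref{free} the relevant isotropy action is proper and free, the local slice theorem splits a neighborhood of $\Lambda_0$ as a product of the group with a slice $S$, and if points of $A$ off the orbit accumulated at $\Lambda_0$, the curve selection lemma applied to the real-analytic set $A\cap S$ would produce a submanifold of $A_j$ of dimension $\ell+1$, contradicting Lemma \ref{dim10}. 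That is the content of Theorem \ref{suffcon2}.

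Second, you defer the injectivity of $r$ (triviality of the stabilizer) as ``the main obstacle,'' and your heuristic for it misattributes the hypotheses. In the paper this step is Lemma \ref{free}, and it is proved for the \emph{spherical} target group $\Aut_0(\mathbb H^{N'})$---in coordinates where $H=(z,F(z,w),w)$ and $\Aut_0(M')\subset\Aut_0(\mathbb H^{N'})$---by an explicit computation on jets that uses exactly the $2$-nondegeneracy of $H$; non-sphericity plays no role in it. Non-sphericity of $M$ or $M'$ enters elsewhere: by Beloshapka--Vitushkin, $\Aut_0$ of a non-spherical strictly pseudoconvex hypersurface is compact, and this compactness is what makes the action \emph{proper} (Lemmas \ref{proact} and \ref{proact2}). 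Your argument never invokes properness at all, so if it were complete it would apply verbatim when both $M$ and $M'$ are spherical, a case the theorem deliberately excludes; this is a structural signal that the unproven tangent-space identification is doing illegitimate work.
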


We note that the assumption of $2$-nondegeneracy implies that $N'\leq \frac{N(N+1)} 2$.

The outline of the paper is as follows: The proofs of our main results are provided in the very last section \ref{proof}. Before, we fix notation in  section \ref{prelim} and give a jet parametrization result for finitely nondegenerate maps in section \ref{s:jetparam}. Then, we study infinitesimal deformations in section \ref{s:infdef} and deduce some crucial properties of the action of isotropies on the space of maps in section \ref{s:propiso}.

\section{Preliminaries}
\label{prelim}

This section is devoted to introduce some standard notation. For details and proofs, we refer the reader to e.g. \cite{BER2}.

For a generic real-analytic CR submanifold $M\subset \mathbb C^N$ we denote by $n$ its CR dimension and by $d$ its real codimension so that $N=n+d$. It is well-known (cf. \cite{BER2}) that one can choose \emph{normal coordinates}
 $(z,w)\in \mathbb C^n_z\times \mathbb C^d_w=\mathbb C^N$ such that the {\em complexification} $\mathcal M\subset \mathbb C^{2N}_{z,\chi,w,\tau}$ of $M$ is 
 given by
\[ w = Q(z,\chi, \tau), \ \ {\rm (equivalently:} \ \tau = \overline Q(\chi,z, w) {\rm )},\]
for a suitable germ of holomorphic map $Q:\mathbb C^{2n+d}\to\mathbb C^{d}$ satisfying the following equations
 \begin{equation}
 \label{NormalCoord}
Q(z,0,\tau) \equiv Q(0,\chi,\tau) \equiv \tau, \ \ \ \ Q(z,\chi, \overline Q (\chi, z, w)) \equiv w.
   \end{equation}

Given a defining equation $\rho' \in (\C\{Z',\bar Z'\})^{d'}$ for $M'$, we recall that
 a germ of a mapping $H(z,w)\in (\mathbb C\{z,w\})^{N'}$ with $H(0,0)=0$, belongs to $\mathcal H(M,M')$ if and only if it solves the \emph{mapping equation}
\begin{align}
\label{mapeq}
\rho'(H(z,w),\overline H(\chi,\tau))= 0 \quad {\rm for}\  w = Q(z,\chi,\tau).
\end{align}

We endow $(\C\{Z\})^{N'}$, which we consider as the space of germs at $0$ of holomorphic maps from $\C^N$ to $\C^{N'}$,  with the natural direct limit topology and consider $\mathcal H(M,M') \subset (\C\{Z\})^{N'}$ with the induced topology.

We denote by $L_j$ and $\bar L_j$, $j=1,\ldots, n$, a commuting basis of the germs of CR and anti-CR vector fields, respectively, tangent to $\mathcal M$. Furthermore it will be convenient to consider the following vector fields, which are also tangent to $\mathcal M$:
\[T_\ell = \frac{\partial}{\partial w_\ell} + \sum_{k=1}^d \overline Q^k_{w_\ell}(\chi,z,w) \frac{\partial}{\partial \tau_k}, \ \ \ \ S_j = \frac{\partial}{\partial z_j} + \sum_{k=1}^d\overline Q^k_{z_j}(\chi,z,w) \frac{\partial}{\partial \tau_k} \]
where $1\leq \ell \leq d$, $1\leq j\leq n$. 

In the sequel we denote by $\N = \{1,2,3,\ldots\}$ the set of natural numbers and write $\N_0 = \N \cup \{0\}$. The notion of nondegeneracy we are interested in was introduced in \cite{La}. For our purposes we will also need a slightly weaker one:

\begin{definition}\label{defNondeg}
Let $M'=\{\rho' = 0\}$, where $\rho' =(\rho_1',\ldots, \rho'_{d'}) \in  (\mathbb C\{Z',\zeta'\})^{d'}$ is a local defining function for $M'$. Given a holomorphic map
 $H=(H_1,\ldots,H_{N'})\in (\mathbb C\{z,w\})^{N'}$, a fixed sequence $(\iota_1,\ldots,\iota_{N'})$ of 
 multiindices $\iota_m\in\N_0^n$ and integers $\ell^1,\ldots, \ell^{N'}$ with $1 \leq \ell^j \leq d'$, we consider the determinant
 \begin{equation} \label{folcon}
s = \det \left(\begin{array}{ccc} 
 L^{\iota_1}\rho'_{\ell^1, Z_1'}(H(z,w),\overline H(\chi,\tau)) & \cdots & L^{\iota_1}\rho'_{\ell^1,Z_{N'}'}(H(z,w),\overline H(\chi,\tau)) \\  \vdots & \ddots & \vdots \\
 L^{\iota_{N'}}\rho'_{\ell^{N'}, Z_1'}(H(z,w),\overline H(\chi,\tau)) & \cdots & L^{\iota_{N'}} \rho'_{\ell^{N'}, Z_{N'}'}(H(z,w),\overline H(\chi,\tau))\end{array}\right).
\end{equation}

We define the open set $\mathcal F_{k} \subset \mathcal H(M,M')$ as the set of maps $H$  for which there exists a sequence of multiindices $(\iota_1,\ldots,\iota_{N'})$ with $k = \max_{1 \leq m \leq N'}|\iota_m|$ and integers $\ell^1,\ldots, \ell^{N'}$ as above  such that $s(0)\neq 0$.
We will say that $H$ with $H(M) \subset M'$ is {\em $k_0$-nondegenerate} if $k_0 = \min \{ k\colon H \in \mathcal{F}_k \}$ is a finite number.
\end{definition}

 Note that the definition of both $\mathcal F_{k_0}$ and the space of $k_0$-nondegenerate maps are independent of the choice of coordinates (see 
\cite[Lemma 14]{La}), hence these spaces are invariant under the action of $G$. Also notice that in the setup of Theorem \ref{suffcon2intro} the space of $2$-nondegenerate maps coincides with the set $\mathcal F_2$.

Our first main goal in this paper is to study the following property:

\begin{definition}\label{locrig}
Let $M$ and $M'$ be germs of submanifolds in $\mathbb C^N$ (resp. $\mathbb C^{N'}$) around $0$, 
and let $H$ be a mapping of $M$ into $M'$. 
We say that $H$ is \emph{locally rigid} if $H$ projects to an isolated point in the 
quotient $\faktor{\mathcal H(M,M')}{G}$ of the space $\mathcal H(M,M')$ of holomorphic mappings from $M$ to $M'$ with respect to the group of isotropies $G$.
\end{definition}

\begin{remark}\label{rem:equcon}
 It is easy to show that $H \in \mathcal H(M,M')$ is locally rigid according to the definition above if and only if  there exists a neighborhood $U$ of $H$ in $(\mathbb C\{Z\})^{N'}$ such that for every $\hat H\in \mathcal H(M,M')\cap U$ there is $g\in G$ such that $\hat H=g H$. In other words, $H$ is locally rigid if and 
 only if all the maps in $\mathcal H(M,M')$ which are close enough to $H$ are equivalent to $H$ (see Remark 12 in \cite{dSLR15}).
\end{remark}

\section{Jet parametrization}
\label{s:jetparam}

In order to prove our main theorems  we will show that in an appropriate sense, the infinitesimal deformations can be considered as a tangent space, by deducing a jet parametrization result for maps in $\mathcal F_{k}$ based on the work in \cites{La, BER99, JL}. First we will introduce some notation.

Let $H: \C^N \to \C^{N'}$ be a germ of a holomorphic map and let $k$ be an integer. We denote by $j_0^k H$ the \emph{$k$-jet of $H$ at $0$}, that is the collection of all derivatives of order $\leq k$ of the components of $H$ at $0$. The space of all $k$-jets at $0$ will be denoted by $J_0^k$ (we drop the dependence on $N$ and $N'$, which will remain fixed, for better readability). We denote by $\Lambda$ coordinates in $J_0^k$ and write
$\Lambda = (\Lambda',\Lambda_{N'}) = (\Lambda_1,\ldots,\Lambda_{N'-1},\Lambda_{N'})$ with $\Lambda_j=(\Lambda_j^{\alpha,\beta})$, where $\alpha \in \N_0^{n}, \beta \in \N_0^d$ and $0\leq |\alpha|+|\beta| \leq k$. We have  
\[ \Lambda = j_0^k H \text{ if  and only if } \Lambda_j^{\alpha,\beta} = \frac{1}{\alpha!\beta!} 
\frac{\partial^{|\alpha|+|\beta|} H_j}{\partial z^\alpha \partial w^\beta} (0).  \]
We can identify $k$-th order jets with polynomial maps of degree at most $k$ (taking $\CN$ into $\CNp$) and will do so freely in the sequel. In particular, the composition of a jet with a jet is defined, as well as the composition of jets with other maps, provided that the source and target dimensions allow it. 
 
We also need to recall the definition of certain subsets of $\mathbb C^N$, commonly referred to as the \emph{Segre sets}. In order to do so we need to introduce some notation. For any $j\in \N$ let $(x_1,\ldots, x_j)$ ($x_\ell\in \mathbb C^n$) be coordinates for $\mathbb C^{nj}$. The \emph{Segre map} of order $q\in \N$ is the map $S^q_0:\mathbb C^{nq}\to \mathbb C^N$ inductively defined as follows:
 \[S^1_0(x_1) = (x_1,0), \ \ S^q_0(x_1,\ldots,x_q) = \left (x_1, Q\left(x_1,\overline S^{q-1}_0(x_2,\ldots,x_q) \right)\right)\]
where we denote by $\overline S^{q-1}_0$ the power series whose coefficients are conjugate to the ones of $S^{q-1}_0$ and $Q$ is a map as given in \eqref{NormalCoord}. In particular if $w-Q(z,\chi,\tau)=0$ is a local defining equation of the complexification of a CR submanifold $M \subset \C^N$, we say the Segre map $S^q_0$ is associated to $M$. The $q$-th Segre set $\mathcal S^q_0\subset \mathbb C^N$ is then the image of the map $S^q_0$. In what follows we will use the notation $x^{[j;k]} = (x_j,\ldots,x_k)$. It is known from the Baouendi-Ebenfelt-Rothschild minimality criterion \cite{BER1} that if $M$ is minimal at $0$, then
$S_0^j$ is generically of full rank if $j$ is large enough. We recall that a germ of a CR submanifold $M \subset \C^N$ is called \emph{minimal at $p \in M$} if there is no germ of a CR submanifold $\tilde M \subsetneq M$ of $\C^N$ through $p$ having the same CR dimension as $M$ at $p$.

\begin{theorem}\label{jetparam} 
Let $M\subset \mathbb C^N$ be the germ of a real-analytic, generic minimal submanifold, $0\in M$, and let $M'\subset \mathbb C^{N'}$ be a real-analytic generic submanifold germ. Let $k_0 \in \N$ and $\mathbf t \leq d+1$ be the minimum integer, such that the Segre map $S^{\mathbf t}_0$ of order $\mathbf t$ associated to $M$  is generically of full rank. First suppose that $\mathbf t$ is even. 
There exists a finite collection of polynomials
$q_{j}(\Lambda)$ on $J_0^{\mathbf t k_0}$ for $j \in J$, where $J$ is a suitable finite index set, open neighborhoods $\mathcal U_j$ of  $\{0\} \times U_j$ in $\mathbb C^N \times J_0^{\mathbf t k_0}$, where $U_j = \{q_j\neq 0\}$ and holomorphic maps
$\Phi_j \colon \mathcal U_j \to \C^{N'} $ satisfying $\Phi_j(0,\Lambda) = 0$, which are of the form 
\begin{align} \label{rationalJetParam}
\Phi_j (Z,\Lambda) =  \sum_{\alpha\in \N_0^N} \frac{p_j^{\alpha} (\Lambda)}{q_j(\Lambda)^{d^j_\alpha}} Z^\alpha, \quad  p_j^\alpha, q_j \in \C[\Lambda], \quad d^j_\alpha\in\N_0,
\end{align}
such that the following holds:
\begin{itemize}
\item For every $H\in\mathcal F_{k_0}$, in particular for every $k_0$-nondegenerate map $H$, there exists $j \in J$ such that $j_0^{\mathbf t k_0} H \in U_j$.
\item For every $H\in\mathcal F_{k_0} $ with $j_0^{\mathbf t k_0} H \in U_j$ we have \[ H(Z) = \Phi_j (Z,j_0^{\mathbf t  k_0} H).\]
\end{itemize}

In particular, there exist (real) polynomials $c^j_k$, $k\in\N$ on $J_0^{\mathbf t  k_0}$ such that
\begin{align}
\label{defEquationJetParam}
 j_0^{\mathbf t k_0} \mathcal F_{k_0} = \bigcup_{j \in J}\{ \Lambda\in J_0^{\mathbf t k_0} \colon q_j(\Lambda) \neq 0, \, c^j_k (\Lambda, \bar \Lambda) =0 \}.
 \end{align}

 Analogous statements hold for $\mathbf t$ odd, where in this case all $p_j^{\alpha}$ and $q_j$ in the expansion of $\Phi_j$ in \eqref{rationalJetParam} depend antiholomorphic on $\Lambda$.
\end{theorem}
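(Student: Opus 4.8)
The plan is to derive the parametrization from two ingredients that are by now standard in this circle of ideas, following the reflection-identity technique of \cites{La,BER99} together with the Segre-set machinery of \cites{BER1,JL}: first, the finite nondegeneracy hypothesis produces a \emph{reflection identity} that expresses the conjugate map $\overline H$ on the first Segre set as a rational function of a bounded jet of $H$; second, the minimality of $M$ lets us propagate this identity along the Segre sets until a full-dimensional neighborhood of $0$ is filled, at which point $H$ itself becomes a rational function of its jet of order $\mathbf{t}k_0$. Throughout I would keep careful track of the rational structure in $\Lambda$ and of the parity of $\mathbf{t}$.

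The reflection identity is the first step. Differentiating the mapping equation \eqref{mapeq} on the complexification $\mathcal{M}$ by the anti-CR operators $L^{\iota}$, which act only on the factor $\overline H(\chi,\tau)$, and using the multiindices $(\iota_1,\dots,\iota_{N'})$ with $\max_m|\iota_m| = k_0$ and indices $\ell^1,\dots,\ell^{N'}$ supplied by Definition \ref{defNondeg}, one obtains a square linear system whose coefficient matrix is exactly the one whose determinant $s$ of \eqref{folcon} does not vanish at $0$. Indexing the finite set $J$ by the admissible choices of $(\iota_m,\ell^m)$ and letting $q_j$ be the polynomial in the jet variables obtained from $s$, Cramer's rule solves the system and expresses $\overline H$ along the first Segre set $\mathcal{S}^1_0$ as a rational function of $Z$ and $j_0^{k_0}H$ with denominator a power of $q_j$; the open sets $U_j = \{q_j\neq 0\}$ arise precisely as the loci where this inversion is licit, and for $H\in\mathcal{F}_{k_0}$ at least one such $j$ applies.

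Next I would iterate. The normalization \eqref{NormalCoord} and the recursive definition $S^q_0(x_1,\dots,x_q) = (x_1, Q(x_1, \overline S^{q-1}_0(x_2,\dots,x_q)))$ allow the identity obtained on $\mathcal{S}^1_0$ to be composed with itself: expressing $\overline H$ in terms of the jet of $H$, re-substituting into the differentiated mapping equation along $\mathcal{S}^2_0$, and so on. Each composition raises the required jet order by $k_0$ and toggles the roles of $H$ and $\overline H$; after $\mathbf{t}$ steps the minimality of $M$ guarantees, through the generic full rank of $S^{\mathbf{t}}_0$ \cite{BER1}, that the resulting formula determines $H$ on a full neighborhood of $0$ and hence as a germ. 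Clearing denominators through the $\mathbf{t}$ substitutions collapses them to a single power of one polynomial $q_j$, yielding $H(Z) = \Phi_j(Z, j_0^{\mathbf{t}k_0}H)$ with $\Phi_j$ of the rational shape \eqref{rationalJetParam}, holomorphic on a neighborhood $\mathcal{U}_j$ of $\{0\}\times U_j$ and with $\Phi_j(0,\Lambda)=0$ because $H(0)=0$. The parity of $\mathbf{t}$ enters exactly here: an even number of conjugations cancels and leaves a formula holomorphic in $\Lambda$, whereas an odd number leaves the conjugate jet, giving the \emph{antiholomorphic} dependence recorded in the statement.

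Finally, for the defining equations \eqref{defEquationJetParam} I would substitute the parametrization back into the mapping equation: a jet $\Lambda\in U_j$ lies in $j_0^{\mathbf{t}k_0}\mathcal{F}_{k_0}$ if and only if $\Phi_j(\cdot,\Lambda)$ is a genuine map of $M$ into $M'$ and is jet-consistent, i.e. $\rho'(\Phi_j(Z,\Lambda),\overline{\Phi_j}(\chi,\tau,\bar\Lambda))$ vanishes on $\mathcal{M}$ and $j_0^{\mathbf{t}k_0}\Phi_j(\cdot,\Lambda)=\Lambda$. Expanding in powers of $(Z,\chi,\tau)$ and clearing the denominator $q_j$ turns the coefficients of these real-analytic conditions into real polynomials $c^j_k(\Lambda,\bar\Lambda)$, which gives \eqref{defEquationJetParam}. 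The main obstacle I anticipate is not any single reflection identity but the \emph{uniformity} of the construction: one must check that after all $\mathbf{t}$ substitutions a single denominator (a fixed power of $q_j$) suffices, that the jet order never exceeds $\mathbf{t}k_0$, and that the finitely many admissible index choices $j\in J$ cover all of $\mathcal{F}_{k_0}$ simultaneously rather than one map at a time; coordinating these is where the bulk of the technical work lies.
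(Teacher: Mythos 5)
Your first two steps (the Cramer's-rule reflection identity coming from finite nondegeneracy, and its iteration along the Segre sets, with the even/odd parity bookkeeping) are exactly the paper's Lemma \ref{lem:basicIdentity}, Lemma \ref{lem:derivBasicIdentity} and Corollary \ref{cor:iterationSegre}, and your treatment of \eqref{defEquationJetParam} by substituting the parametrization back into \eqref{mapeq} matches the paper's final paragraph. The gap is in the step in between. What the iteration actually produces is a formula for the \emph{composition} $H\circ S^{\mathbf t}_0$, namely $H(S^{\mathbf t}_0(x^{[1;\mathbf t]}))=\varphi_{\mathbf t}(x^{[1;\mathbf t]},j_0^{\mathbf t k_0}H)$, and nothing more. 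Your claim that this ``determines $H$ on a full neighborhood of $0$'' is false: the Segre map is singular at the origin, and its image $\mathcal S^{\mathbf t}_0$ contains an open set having $0$ in its closure but in general \emph{no} neighborhood of $0$ (already for ${\rm Im}\, w=\|z\|^2$ the image of $S^2_0$ misses $\{z=0,\ w\neq 0\}$). What is true is that the germ of $H$ at $0$ is \emph{uniquely determined} by $H\circ S^{\mathbf t}_0$, via the identity theorem; but uniqueness is far weaker than what the theorem asserts. You must produce a map $\Phi_j$ that is jointly holomorphic on a neighborhood $\mathcal U_j$ of $\{0\}\times U_j$, rational in $\Lambda$ as in \eqref{rationalJetParam}, and that reconstructs $H$ from $\varphi_{\mathbf t,j}(\cdot,\Lambda)$. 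This means inverting the composition operator $h\mapsto h\circ S^{\mathbf t}_0$ \emph{with holomorphic dependence on parameters}, even though $S^{\mathbf t}_0$ drops rank at $0$. A right inverse of $S^{\mathbf t}_0$ from the rank theorem exists only near points $Z_1\neq 0$ of generic rank, so that route parametrizes $H$ only near such $Z_1$; for $\Lambda$ not equal to the jet of an actual solution there is no reason for the resulting expression to extend across the singular locus, and for actual solutions one lacks uniform control of their domains, so holomorphy in $\Lambda$ cannot be propagated back to $Z=0$ by a soft argument.

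This is precisely the point where the paper invokes Theorem 5 of \cite{JL} (and formula (42) of Theorem 6 there for the rational shape): a singular division theorem giving a holomorphic map $\phi$ on a neighborhood $\mathcal V$ of $S^{\mathbf t}_0$ times $\C\{x^{[1;\mathbf t]}\}$ with $\phi(A,h\circ A)=h$ whenever $\nu(A)=\nu(S^{\mathbf t}_0)$. Setting $\Phi_j(\cdot,\Lambda)=\phi\bigl(S^{\mathbf t}_0,\varphi_{\mathbf t,j}(\cdot,\Lambda)\bigr)$ is what yields joint holomorphy near $\{0\}\times U_j$, the normalization $\Phi_j(0,\Lambda)=0$, and the expansion \eqref{rationalJetParam}. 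The uniformity issues you flag at the end (a single denominator $q_j$, jet order bounded by $\mathbf t k_0$, finitely many indices $j$ covering $\mathcal F_{k_0}$) are genuine but routine bookkeeping; the missing idea is this division-with-parameters step, without which your argument establishes only the uniqueness half of the statement, not the existence of the parametrization $\Phi_j$.
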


The proof of Theorem \ref{jetparam} will be split up into several lemmas. We define $K(t) = |\{ \alpha \in \N_0^N: |\alpha| \leq t\}|$.

\begin{lemma}\label{lem:basicIdentity}
Let $M$ and $M'$ be as before. Fix multiindices $(\iota_1,\ldots,\iota_{N'})$ and integers $\ell^1,\ldots, \ell^{N'}$ as above. Let $k_0 = \max_{1\leq m \leq N'} |\iota_m|$. There is a holomorphic map $\Psi: \C^N \times \C^N \times \C^{K(k_0) N'} \to \C^{N'}$ such that for every holomorphic map $H: \C^N \to \C^{N'}$ satisfying \eqref{mapeq} and $s(0)\neq 0$, where $s$ is given as in \eqref{folcon}, we have
\begin{align}
\label{basicIdentity}
H(Z) = \Psi(Z,\zeta, \partial^{k_0} \bar H(\zeta)), 
\end{align}
for $(Z,\zeta)$ in a neighborhood of $0$ in $\mathcal M$, where $\partial^{k_0}$ denotes the collection of all derivatives up to order $k_0$. Furthermore there exist polynomials $P_{\alpha, \beta}, Q$ and integers $e_{\alpha,\beta}$ such that 
\begin{align}
\label{basicIdentityRational}
\Psi(Z,\zeta,W) = \sum_{\alpha,\beta \in \N_0^N} \frac{P_{\alpha,\beta}(W)}{Q^{e_{\alpha,\beta}}(W)} Z^{\alpha} \zeta^{\beta}.
\end{align}
\end{lemma}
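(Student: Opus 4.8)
The plan is to produce the identity by differentiating the mapping equation \eqref{mapeq} along the complexification $\mathcal M$ and inverting the result with the holomorphic implicit function theorem. Recall that $L_1,\dots,L_n$ are tangent to $\mathcal M$ and annihilate every holomorphic function of $Z$, so $L_j H_p\equiv 0$ for each component of $H$. Applying the iterated field $L^{\iota_m}$ to the $\ell^m$-th scalar equation in \eqref{mapeq} and using the Leibniz rule, all derivatives fall on the antiholomorphic argument $\overline H(\zeta)$, while $H(Z)$ survives undifferentiated. This yields, for each $m=1,\dots,N'$, a relation
\begin{equation*}
\Xi_m\bigl(H(Z),Z,\zeta\bigr)=0,\qquad \Xi_m(u,Z,\zeta):=L^{\iota_m}\bigl[\rho'_{\ell^m}(u,\overline H(\zeta))\bigr],
\end{equation*}
where $u\in\C^{N'}$ is held fixed under the differentiation. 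The point is that $\Xi_m$ depends on $\zeta$ only through the field coefficients and through the derivatives of $\overline H$ of order at most $|\iota_m|\le k_0$, that is, through $W=\partial^{k_0}\overline H(\zeta)$.

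First I would identify the Jacobian of $u\mapsto(\Xi_1,\dots,\Xi_{N'})$. Since $\partial/\partial u_k$ commutes with each $L_j$, one has $\partial_{u_k}\Xi_m=L^{\iota_m}[\rho'_{\ell^m,Z_k'}(u,\overline H(\zeta))]$, and evaluation at $u=H(Z)$ reproduces exactly the matrix in \eqref{folcon}, whose determinant is $s$. Replacing the derivatives of $\overline H$ by an independent jet variable $W$, the system $\Xi_m(u,Z,\zeta,W)=0$ has Jacobian of determinant a function $Q(W)$ built from the nondegeneracy determinant with $Q\neq 0$ at the relevant jets; because $s(0)\neq 0$, the holomorphic implicit function theorem solves for $u$ holomorphically near the central point and produces a germ $\Psi(Z,\zeta,W)$. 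Restricting to $(Z,\zeta)\in\mathcal M$ and substituting $W=\partial^{k_0}\overline H(\zeta)$ gives \eqref{basicIdentity}. The three groups of arguments of $\Psi$ are $Z$, $\zeta$, and the $K(k_0)N'$ entries of $W$ (namely $N'$ components times the $K(k_0)$ derivatives of order $\le k_0$ in the $N$ variables $\zeta$), which accounts for the stated source $\C^N\times\C^N\times\C^{K(k_0)N'}$.

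It remains to promote the analytic $\Psi$ to the rational normal form \eqref{basicIdentityRational}. The natural strategy is to treat $W$ as independent, expand $\Psi$ as a power series in $(Z,\zeta)$, and determine its coefficients recursively by differentiating the identity $\Xi_m(\Psi,Z,\zeta,W)=0$ and setting $Z=\zeta=0$: at each order the unknown coefficient should solve a linear system whose matrix is the evaluation at $Z=\zeta=0$ of $(\partial_{u_k}\Xi_m)$, with determinant $Q(W)$, so that Cramer's rule yields coefficients that are polynomials in $W$ divided by powers of $Q(W)$. Feeding the inductive hypothesis that the lower-order coefficients are already rational with denominators powers of $Q$, each step at positive order reduces to a single linear Cramer solve, and the rational form propagates.

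The main obstacle lies precisely at the base of this recursion. Since $\rho'$ is not affine in its first argument, $\Xi_m$ is genuinely nonlinear in $u$; consequently the constant coefficient $\Psi(0,0,W)$ satisfies a nonlinear system and is a priori only holomorphic — not rational — in $W$, which also infects the determinant governing every later step. Overcoming this is the heart of the matter: one must exploit finite nondegeneracy more strongly than the mere invertibility of the Jacobian, organizing the elimination so that the genuinely nonlinear contributions are absorbed into already-determined, rational, lower-order data and the constant term is pinned down separately, thereby guaranteeing that one fixed polynomial denominator $Q(W)$ suffices at every order. I expect the careful bookkeeping that makes this rational elimination close up to be the most delicate and technical step of the proof.
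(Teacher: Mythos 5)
Your treatment of \eqref{basicIdentity} is correct and coincides with the paper's route: the paper disposes of this half by citing \cite[Lemma 14]{La}-style, namely it declares \eqref{basicIdentity} to be a reformulation of Prop.~25 in \cite{La}, and the proof of that proposition is exactly your argument --- apply $L^{\iota_m}$ to \eqref{mapeq}, use that the CR fields annihilate holomorphic functions of $Z$ so that the Jacobian with respect to $u=H(Z)$ of the resulting system is the matrix in \eqref{folcon}, then invoke the holomorphic implicit function theorem. One point you gloss over even here: your implicit function theorem is centered at $(0,0,\partial^{k_0}\bar H(0))$, which depends on $H$, so by itself it produces one germ of $\Psi$ per map (or per nearby cluster of jets), not the single $\Psi$ valid for \emph{every} $H$ with $s(0)\neq 0$ that the lemma asserts; in the paper that uniformity is exactly what the rational formula \eqref{basicIdentityRational}, which makes sense on all of $\{Q\neq 0\}$, is for.

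The genuine gap is that you never prove \eqref{basicIdentityRational}: you correctly observe that the naive Cramer recursion stalls because the system is nonlinear in $u$, and you then declare overcoming this to be ``the heart of the matter'' and stop. That missing step \emph{is} the content of the second half of the lemma, so the proposal is incomplete precisely where the paper's citation to \cites{BER99, JL} (``the way the implicit function theorem is applied'') carries the load. Note also that in your setup even the denominator is not yet a polynomial: the Jacobian determinant of your system involves derivatives of $\rho'$ evaluated at $(u,W^{(0)})$, hence a priori depends holomorphically, not polynomially, on the zero-jet slot of $W$. The mechanism that actually closes the recursion in the cited sources is not a stronger use of nondegeneracy, as you suggest, but two structural facts you do not exploit: (i) the constant term of the unknown is not solved from any (nonlinear) equation --- it is $0$ because every $H\in\mathcal H(M,M')$ satisfies $H(0)=0$, and since \eqref{basicIdentity} only has to hold upon substitution of jets of actual solutions, one may \emph{define} $\Psi$ by the recursion anchored at constant term $0$, even if this differs from the full implicit-function-theorem germ off the substitution locus; (ii) the nonlinearity of $\rho'$ is then harmless order by order, because after substitution both arguments of $\rho'$ and of its derivatives vanish at $(Z,\zeta)=(0,0)$, and a convergent series composed with series having zero constant term has each Taylor coefficient equal to a polynomial, with numerical coefficients (finitely many Taylor coefficients of $\rho'$), in finitely many coefficients of the inner series; the jet variables $W$ enter only polynomially, through the Fa\`a di Bruno factors generated by $L^{\iota_m}$. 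Each order is then a single linear solve with the fixed matrix whose determinant is $Q$, and Cramer's rule propagates the form \eqref{basicIdentityRational}. A blind proof of this lemma had to either carry out this recursion or make the reduction to \cite{BER99} and \cite{JL} precise; yours does neither.
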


In Lemma \ref{lem:basicIdentity} the statement up until \eqref{basicIdentity} is a reformulation of Prop. 25 in \cite{La}. The expansion in \eqref{basicIdentityRational} follows from the way the implicit function theorem is applied in the proof of Prop. 25, in a similar fashion as in \cites{BER99, JL}.

From now on all the jet parametrization mappings that will appear in the following lemmas will depend on the multiindices and integers fixed in Lemma \ref{lem:basicIdentity}. For the sake of readability we will omit to write this dependence explicitly.

\begin{lem}\label{lem:derivBasicIdentity}
Under the assumptions of Lemma \ref{lem:basicIdentity} the following holds: For all $\ell \in \N$ there exists a holomorphic mapping $\Psi_{\ell}: \C^N \times \C^N \times \C^{K(k_0 + \ell) N'} \to \C^{N'}$ such that for every holomorphic map $H: \C^N \to \C^{N'}$ satisfying \eqref{mapeq} and $s(0)\neq 0$, where $s$ is given as in \eqref{folcon}, we have
\begin{align}
\label{derivBasicIdentity}
\partial^{\ell} H(Z) = \Psi_{\ell}(Z,\zeta, \partial^{k_0+ \ell} \bar H(\zeta)), 
\end{align}
for $(Z,\zeta)$ in a neighborhood of $0$ in $\mathcal M$, where $\partial^{\ell}$ denotes the collection of all derivatives up to order $\ell$. Furthermore there exist polynomials $P^{\ell}_{\alpha, \beta},Q_{\ell}$ and integers $e^{\ell}_{\alpha,\beta}$ such that 
\begin{align}
\label{derivBasicIdentityRational}
\Psi_{\ell}(Z,\zeta,W) = \sum_{\alpha,\beta \in \N_0^N} \frac{P^{\ell}_{\alpha,\beta}(W)}{Q_{\ell}^{e^{\ell}_{\alpha,\beta}}(W)} Z^{\alpha} \zeta^{\beta}.
\end{align}

\end{lem}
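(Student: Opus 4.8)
The plan is to obtain all the derivatives of $H$ by differentiating the identity \eqref{basicIdentity} along the vector fields $S_j$ and $T_\ell$. Since these fields are tangent to the complexification $\cm$, and \eqref{basicIdentity} is an identity that holds only on $\cm$, two functions agreeing on $\cm$ have equal derivatives along them, so applying $S_j$ and $T_\ell$ to both sides of \eqref{basicIdentity} again produces a valid identity on $\cm$. The elementary observation driving everything is that, because $H=H(Z)$ depends only on $Z=(z,w)$ and not on $(\chi,\tau)$, the $\partial_{\tau_k}$-components of $S_j$ and $T_\ell$ annihilate $H$, whence $S_j H=\partial H/\partial z_j$ and $T_\ell H=\partial H/\partial w_\ell$. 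As the right-hand sides are again functions of $Z$ alone, this persists under iteration: applying any word of length $m=|\gamma|+|\delta|$ in the $S_j,T_\ell$ to $H$ reproduces the ambient derivative $\partial^{m}H/\partial z^\gamma\partial w^\delta$, independently of the order of application. Thus every derivative of $H$ of order $\le\ell$ arises from a suitable product of at most $\ell$ of these fields, and assembling the corresponding components will produce the map $\Psi_\ell$.

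Next I would apply the same products of vector fields to the right-hand side $\Psi(Z,\zeta,\partial^{k_0}\bar H(\zeta))$ and compute by the chain rule, tracking two features. For the derivative order: the only way a field $S_j$ or $T_\ell$ reaches the arguments $\partial^\gamma\bar H(\zeta)$ is through its $\partial_{\tau_k}$-component, which sends $\partial^\gamma\bar H_m(\zeta)$ to a derivative of $\bar H$ of order $|\gamma|+1$; hence after $\ell$ applications the expression depends on $\partial^{k_0+\ell}\bar H(\zeta)$ only, exactly as claimed. For the rational structure of \eqref{basicIdentityRational}: differentiating $\Psi$ in a $Z$- or $\zeta$-slot preserves the form, differentiating a coefficient $P_{\alpha,\beta}(W)/Q^{e_{\alpha,\beta}}(W)$ in a $W$-slot yields, by the quotient rule, a coefficient whose denominator is a higher power of the same polynomial $Q$, and the holomorphic factors $\bar Q^k_{z_j}(\chi,z,w)$ and $\bar Q^k_{w_\ell}(\chi,z,w)$ introduced by the fields depend only on $(\chi,z,w)$ and can therefore be absorbed into the power series in $(Z,\zeta)$. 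So each application preserves the shape $\sum_{\alpha,\beta}\tfrac{P_{\alpha,\beta}(W)}{Q^{e_{\alpha,\beta}}(W)}Z^\alpha\zeta^\beta$, and an induction on $\ell$ delivers a $\Psi_\ell$ of the form \eqref{derivBasicIdentityRational} with denominator polynomial $Q_\ell$ a power of $Q$. Since $s(0)\neq 0$ guarantees $Q\neq 0$ at the relevant jet value, $\Psi_\ell$ is holomorphic near the corresponding point, and \eqref{derivBasicIdentity} holds in a neighborhood of $0$ in $\cm$.

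The routine but most delicate step will be the bookkeeping in this induction: confirming that the order of $\bar H$-derivatives grows by exactly one per vector field, so that derivatives of $H$ of order $\le\ell$ require no more than $\partial^{k_0+\ell}\bar H$, and that after collecting the finitely many derivatives of order $\le\ell$ a common denominator can be taken to be a single power $Q^{e}$, yielding one polynomial $Q_\ell$ in \eqref{derivBasicIdentityRational}. Fixing one ordering of the $S_j,T_\ell$ in each word makes $\Psi_\ell$ well defined, while the consistency of different orderings on the left-hand side is automatic, since each computes the same ambient derivative of $H$.
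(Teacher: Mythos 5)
Your proposal is correct and follows essentially the same route as the paper, which proves the lemma by differentiating \eqref{basicIdentity} along the tangent vector fields $S_j$ and $T_\ell$ (citing Cor.~26 of \cite{La} for the details you spell out). Your bookkeeping of the derivative orders and of the rational structure under the chain and quotient rules is exactly the content behind that citation.
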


Lemma \ref{lem:derivBasicIdentity} follows by differentiating \eqref{basicIdentity} along the vector fields $S$ and $T$ introduced above, see Cor. 26 of \cite{La}.

The next step is to evaluate (\ref{basicIdentity}) along the Segre sets.

\begin{cor}\label{cor:iterationSegre}
Under the assumptions of Lemma \ref{lem:basicIdentity} the following holds: For fixed $q \in \N$ there exists a holomorphic mapping $\varphi_{q}: \C^{qn} \times \C^{K(q k_0) N'}\to \C^{N'}$ such that for every holomorphic map $H: \C^N \to \C^{N'}$ satisfying \eqref{mapeq} and $s(0)\neq 0$, where $s$ is given as in \eqref{folcon}, we have
\begin{align}
\label{iterationSegre}
H(S^q_0(x^{[1;q]})) = \varphi_{q}(x^{[1;q]}, j_0^{q} H).
\end{align}
Furthermore there exist (holomorphic) polynomials $R^{q}_{\gamma},S_{q}$ and integers $m^q_{\gamma}$ such that 
\begin{equation}
\label{iterationSegreRational}
\varphi_{q}(x^{[1;q]},\Lambda) =\begin{cases}  \sum_{\gamma \in \N_0^{qn}} \frac{R^{q}_{\gamma}( \Lambda)}{S_{q}^{m^{q}_{\gamma}}(\Lambda)} (x^{[1;q]})^{\gamma}  & q \text{ even }\\
\sum_{\gamma \in \N_0^{qn}} \frac{R^{q}_{\gamma}( \bar \Lambda)}{S_{q}^{m^{q}_{\gamma}}(\bar \Lambda)} (x^{[1;q]})^{\gamma} & q \text{ odd }.\end{cases}
\end{equation}

\end{cor}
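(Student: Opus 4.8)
The plan is to prove Corollary \ref{cor:iterationSegre} by induction on $q$, evaluating the basic identity \eqref{basicIdentity} along the Segre maps and using the fact that the conjugate of a Segre map of order $q-1$ appears naturally in the recursive definition of $S^q_0$. The key structural observation is that $S^q_0(x^{[1;q]}) = (x_1, Q(x_1, \overline{S^{q-1}_0}(x^{[2;q]})))$, so that evaluating the identity $H(Z) = \Psi(Z,\zeta,\partial^{k_0}\bar H(\zeta))$ at $Z = S^q_0(x^{[1;q]})$ and a correspondingly chosen $\zeta$ on $\mathcal M$ introduces the conjugated derivatives $\partial^{k_0}\bar H$ evaluated at $\overline{S^{q-1}_0}(x^{[2;q]})$. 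This is precisely a conjugate instance of the quantity controlled at the previous induction step.

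First I would set up the base case $q=1$: here $S^1_0(x_1)=(x_1,0)$, and since this lies in the first Segre set, \eqref{basicIdentity} (or rather its specialization) expresses $H(S^1_0(x_1))$ in terms of $x_1$ and the jet $j_0^1 H$, giving $\varphi_1$ with the rational form claimed. The inductive step is where the real work lies. Assuming the statement holds for $q-1$ with an expression $\overline{H(S^{q-1}_0(x^{[2;q]}))} = \overline{\varphi_{q-1}}(x^{[2;q]}, \overline{j_0^{q-1}H})$, together with Lemma \ref{lem:derivBasicIdentity} applied to produce $\partial^{k_0}\bar H$ evaluated along $\overline{S^{q-1}_0}$ in terms of $j_0^{(q-1)k_0}\bar H$ and $x^{[2;q]}$, I would substitute these into \eqref{basicIdentity} evaluated at the pair $(S^q_0(x^{[1;q]}), \overline{S^{q-1}_0}(x^{[2;q]}))\in\mathcal M$. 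Composing the rational expressions from \eqref{basicIdentityRational} and \eqref{derivBasicIdentityRational} with the rational expression \eqref{iterationSegreRational} at stage $q-1$ yields a new rational expression in $x^{[1;q]}$ whose coefficients are rational in the jet, which after clearing denominators gives the polynomial data $R^q_\gamma, S_q, m^q_\gamma$. The parity alternation in \eqref{iterationSegreRational} is then automatic: each increment of $q$ swaps holomorphic and antiholomorphic dependence on $\Lambda$, because passing from stage $q-1$ to stage $q$ inserts exactly one conjugation (the bar on $S^{q-1}_0$).

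The main obstacle I anticipate is the careful bookkeeping of jet orders and the verification that the composition of the various rational maps remains well-defined and of the stated rational form. Specifically, one must check that the derivatives $\partial^{k_0+\ell}\bar H$ required by $\Psi_\ell$ at the recursive step are available at the correct total order $q k_0$, so that everything is expressed through $j_0^q H$ rather than a higher jet — this is the reason the statement uses $j_0^q H$ and not $j_0^{qk_0} H$, and reconciling this requires tracking how differentiation along the $S$ and $T$ vector fields interacts with the chain rule when composing with the Segre map. One must also ensure that the denominators appearing do not vanish identically, which is guaranteed by the nonvanishing of $s(0)$ carried through from \eqref{folcon}; the open conditions $q_j\neq 0$ in the eventual parametrization \eqref{rationalJetParam} arise precisely from these denominators. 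Once the algebraic form is confirmed at each step, the induction closes and the rational structure \eqref{iterationSegreRational} follows directly, with the parity dependence dictated by the number of conjugations accumulated.
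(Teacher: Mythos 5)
Your overall strategy --- evaluating \eqref{basicIdentity} at the pair $(S^q_0(x^{[1;q]}),\overline S^{q-1}_0(x^{[2;q]}))\in\mathcal M$, feeding in the derivative identities of Lemma \ref{lem:derivBasicIdentity}, and reading off the parity of the dependence on $\Lambda$ from the number of accumulated conjugations --- is the same mechanism the paper uses. But your induction, as formulated, does not close. Your inductive hypothesis at stage $q-1$ controls only the \emph{values} $H\circ S^{q-1}_0$, whereas the substitution into \eqref{basicIdentity} consumes the collection of derivatives $\partial^{k_0}\bar H$ evaluated along $\overline S^{q-1}_0$. You assert that these are supplied by ``Lemma \ref{lem:derivBasicIdentity} applied to produce $\partial^{k_0}\bar H$ along $\overline S^{q-1}_0$ in terms of $j_0^{(q-1)k_0}\bar H$'', but that is not what the lemma gives: \eqref{derivBasicIdentity} only transfers derivatives at $Z$ into higher-order derivatives at $\zeta$ for $(Z,\zeta)\in\mathcal M$; expressing $\partial^{k_0}\bar H$ along $\overline S^{q-1}_0$ through a jet at $0$ is itself a statement of the same kind as the corollary and needs its own descent along the Segre sets. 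The repair is to strengthen the inductive statement: prove, for all $\ell\in\N_0$ simultaneously, that $\partial^{\ell} H(S^q_0(x^{[1;q]}))=\varphi_{q,\ell}\bigl(x^{[1;q]},j_0^{qk_0+\ell}H\bigr)$, with conjugate dependence for $q$ odd. The step from $q-1$ to $q$ is then Lemma \ref{lem:derivBasicIdentity} (with $\ell$ replaced by $k_0+\ell$) combined with the conjugated hypothesis at stage $q-1$, and the corollary is the case $\ell=0$. This is in substance what the paper does when it substitutes \eqref{secondeval} into \eqref{firsteval} and iterates: the object carried through the iteration is a jet of $H$ (alternately of $\bar H$) whose order grows by $k_0$ at each step, not merely the value of $H$.

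Second, your jet-order bookkeeping is backwards, and the ``obstacle'' you plan to resolve is a phantom. Each of the $q$ descents costs $k_0$ additional derivatives, so the final dependence is on $j_0^{qk_0}H$ (or its conjugate for $q$ odd); this matches the stated domain $\C^{qn}\times\C^{K(qk_0)N'}$ of $\varphi_q$ and the jet $j_0^{\mathbf t k_0}H$ used in Theorem \ref{jetparam}. The ``$j_0^{q}H$'' in \eqref{iterationSegre} is simply a typographical slip for $j_0^{qk_0}H$; no chain-rule interaction between the vector fields $S$, $T$ and the Segre maps can reduce the required order to $q$, and the verification you propose would fail --- already for $q=1$ one has $H(S^1_0(x_1))=\Psi\bigl((x_1,0),0,\partial^{k_0}\bar H(0)\bigr)$, which needs the full $k_0$-jet, and moreover depends on it antiholomorphically (i.e.\ on $\overline{j_0^{k_0}H}$), consistent with the odd-parity case of \eqref{iterationSegreRational}; your base case should record this conjugation rather than a dependence on ``$j_0^1H$''.
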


\begin{proof}
Fix $q\in \N$. We begin by putting $Z=S^q_0(x^{[1;q]})$ and $\zeta = \bar S^{q-1}_0(x^{[2;q]})$, so that $(Z,\zeta)\in \mathcal M$, in the identity (\ref{basicIdentity}), in order to obtain
\begin{align}\label{firsteval}
H(S^q_0(x^{[1;q]})) = \Psi(S^q_0(x^{[1;q]}),\bar S^{q-1}_0(x^{[2;q]}), \partial^{k_0} \bar H(\bar S^{q-1}_0(x^{[2;q]}))).
\end{align}
This equation means that one can determine the value of any solution of (\ref{mapeq}), at least along $\mathcal S^q_0$, by knowing the values of its derivatives along $\mathcal S^{q-1}_0$. To determine the latter, we put $\zeta = \overline S^{q-1}_0(x^{[2;q]})$ and $Z = S^{q-2}_0(x^{[3;q]})$ (again so that $(Z,\zeta) \in \mathcal M$) in the conjugate of (\ref{derivBasicIdentity}):
\begin{equation}\label{secondeval}
\partial^{\ell} \bar H(\overline S^{q-1}_0(x^{[2;q]})) = \bar \Psi_{\ell}(\overline S^{q-1}_0(x^{[2;q]}),S^{q-2}_0(x^{[3;q]}), \partial^{k_0+ \ell} H(S^{q-2}_0(x^{[3;q]}))).
\end{equation} 
By substituting (\ref{secondeval}) for $\ell=k_0$ into (\ref{firsteval}), we get that the values of $H$ along $\mathcal S^q_0$ are determined by the values of their $2k_0$-th order jet along $\mathcal S^{q-2}_0$. Iterating this argument $q$ times we prove \eqref{iterationSegre}. To show \eqref{iterationSegreRational} we use \eqref{basicIdentityRational} and \eqref{derivBasicIdentityRational} at every step; the desired expansion follows by a (cumbersome but) straightforward computation. In particular one derives that $j_0^{q k_0} H \in \{S_q \neq 0\}$, whenever $s(0) \neq 0$. For more details see \cites{BER99, JL}
\end{proof}

\begin{proof}[Proof of Theorem \ref{jetparam}] By the choice of $\mathbf t\leq d+1$, the Segre map $S^{\mathbf t}_0$ is generically of maximal rank, and we can therefore define the finite number $\nu (S^{\mathbf t}_0)$ as the minimum order of vanishing of minor of maximal size of  the  Jacobian of $S^{\mathbf t}_0$.

We can thus appeal to Theorem~5 from \cite{JL} and obtain that there exist a neighborhood $\mathcal V$ of $S^{\mathbf t}_0$ in $(\mathbb C\{x^{[1;\mathbf t]}\})^N$ and a holomorphic map 
\[\phi:\mathcal V\times \mathbb C\{x^{[1;\mathbf t]}\} \to \mathbb C\{Z\}\]
such that $\phi(A,h\circ A) = h$ for all $A\in \mathcal V$ with $\nu(A) = \nu(S^{\mathbf t}_0)$, and for all $h\in\mathbb C\{Z\}$. 

Now, define $J$ as the set of all the sequences of multiindices $(\iota_1,\ldots,\iota_{N'})$ and integers $\ell^1,\ldots, \ell^{N'}$ as in Lemma \ref{lem:basicIdentity} with $k_0 = \max_{1\leq m \leq N'} |\iota_m|$. 
For any $j \in J$, Corollary \ref{cor:iterationSegre} with $q=\mathbf t$ provides the existence of a map $\varphi_{\mathbf t} = \varphi_{\mathbf t,j}$ satisfying \eqref{iterationSegre}. We set $\Phi_j(\cdot,\Lambda) = \phi(S^{\mathbf t}_0, \varphi_{\mathbf t,j}(\cdot,\Lambda))$ so that by the
properties of $\varphi_{\mathbf t,j}$ and $\phi$ the map $\Phi_j$ depends holomorphically on $\Lambda = j_0^{\mathbf t} H$ (or $\bar \Lambda$, respectively, if $j$ is odd). By setting $q_j(\Lambda,\bar \Lambda) = S_{\mathbf t,j}(\Lambda)$, where $S_{\mathbf t,j}$ is given in \eqref{iterationSegreRational}, a direct computation using \eqref{iterationSegreRational} and Thm. 5 in \cite{JL} (more precisely (42) of Thm. 6 of \cite{JL}), allows to derive the expansion in \eqref{rationalJetParam}.

It follows from Corollary \ref{cor:iterationSegre} and Thm.~5 in \cite{JL} that
 $H(Z) = \Phi_j(Z, j^{\mathbf tk_0}_0H)$
  whenever $H$ is a solution of (\ref{mapeq}) and $s(0)\neq 0$, where $s$ is given as in \eqref{folcon} with the sequence of multiindices corresponding to $j$. In particular if $H$ is $k_0$-nondegenerate by definition there exists $j \in J$ such that $s(0)\neq 0$, which by the arguments above is equivalent to the condition $j_0^{\mathbf t k_0} H \in U_j$.
  
Finally, the
remaining statement can be proved by setting
 $H(Z) =
\Phi_j(Z,\Lambda)$ in (\ref{mapeq}) and expanding it as a power series in
$(z,\chi,\tau)$: the coefficients of this power series depend polynomially on
 $\Lambda,\overline \Lambda$, so that the defining equations (\ref{defEquationJetParam}) can be obtained
by setting all the coefficients to $0$. 
\end{proof}

\section{Infinitesimal deformations}
\label{s:infdef}

In the following we refer to the notation of Theorem \ref{jetparam}. For any $j \in J$ let $A_j\subset U_j$ be the real-analytic set defined as
\begin{equation}\label{defas}
A_j = \{\Lambda\in  U_j \colon c^j_k(\Lambda,\overline \Lambda) = 0, k\in \N\}.
\end{equation}
By Theorem \ref{jetparam} putting $A \coloneqq \bigcup_j A_j$ we have $j_0^{\mathbf t k_0}(\mathcal F_{k_0}) = A$; in particular $A$ contains the set of $\mathbf t k_0$-jets of all $k_0$-nondegenerate mappings of $M$ into $M'$.
In fact we can say more:

\begin{lemma}\label{PhiHomo}
For every $j\in J$ we define $\mathcal F_{k_0, j} \coloneqq \mathcal F_{k_0} \cap (j_0^{\mathbf t k_0})^{-1}(U_j)$.
The map $\Phi_j:A_j\to\mathcal F_{k_0,j}$ is a homeomorphism. 
\end{lemma}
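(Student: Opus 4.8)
The plan is to exhibit an explicit inverse for $\Phi_j$ --- namely the jet map $j_0^{\mathbf t k_0}$ --- and then to check continuity in both directions. Here $\Phi_j$ is understood as the map sending a jet $\Lambda$ to the germ at $0$ of $Z \mapsto \Phi_j(Z,\Lambda)$; the domain $A_j$ carries the topology of a real-analytic subset of the finite-dimensional jet space $J_0^{\mathbf t k_0}$, while $\mathcal F_{k_0,j}$ carries the topology induced from the direct limit topology on $(\C\{Z\})^{N'}$.

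First I would establish the set-theoretic bijection. Since $A_j \subset A = j_0^{\mathbf t k_0}(\mathcal F_{k_0})$ by \eqref{defEquationJetParam}, any $\Lambda \in A_j$ is the jet of some $H \in \mathcal F_{k_0}$, and as $\Lambda \in U_j$ in fact $H \in \mathcal F_{k_0,j}$; Theorem \ref{jetparam} then gives $H(Z) = \Phi_j(Z,\Lambda)$. This simultaneously shows that $\Phi_j(\cdot,\Lambda) \in \mathcal F_{k_0,j}$, so the stated target is correct, and that $j_0^{\mathbf t k_0}(\Phi_j(\cdot,\Lambda)) = \Lambda$. Conversely, for $H \in \mathcal F_{k_0,j}$ the jet $\Lambda := j_0^{\mathbf t k_0} H$ satisfies $q_j(\Lambda) \neq 0$ by definition of $\mathcal F_{k_0,j}$ and $c^j_k(\Lambda,\bar\Lambda) = 0$ for all $k$ because $H$ solves \eqref{mapeq}, so $\Lambda \in A_j$ and $H = \Phi_j(\cdot,\Lambda)$ again by Theorem \ref{jetparam}. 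Hence $\Phi_j$ and $j_0^{\mathbf t k_0}$ are mutually inverse bijections between $A_j$ and $\mathcal F_{k_0,j}$.

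Continuity of the inverse is the easy half: extraction of a finite jet at $0$ is linear and, by the Cauchy estimates, continuous on each Banach space of bounded holomorphic maps on a fixed polydisc. Since these Banach spaces form the defining system of the direct limit $(\C\{Z\})^{N'}$, the map $j_0^{\mathbf t k_0}$ is continuous there, and a fortiori on the subspace $\mathcal F_{k_0,j}$.

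The main work, and the step I expect to be the chief obstacle, is continuity of $\Phi_j \colon A_j \to \mathcal F_{k_0,j}$, that is, of a map into a direct limit, for which continuity is subtle (unlike maps out of a direct limit). I would argue pointwise: fix $\Lambda_0 \in A_j$ and a compact neighborhood $K \subset U_j$ of $\Lambda_0$. Since $\Phi_j$ is holomorphic on the open set $\mathcal U_j \supset \{0\} \times U_j$ and $\{0\} \times K$ is compact, a tube-lemma argument yields $r > 0$ with $\overline{D_r} \times K \subset \mathcal U_j$, where $D_r \subset \C^N$ denotes the polydisc of radius $r$. Then $\Phi_j$ is holomorphic and bounded on $D_r \times \mathrm{int}\,K$, so $\Lambda \mapsto \Phi_j(\cdot,\Lambda)$ is continuous from $\mathrm{int}\,K$ into the Banach space $\mathcal{O}(D_{r'})^{N'}$ for each $r' < r$; composing with the continuous canonical inclusion $\mathcal{O}(D_{r'})^{N'} \hookrightarrow (\C\{Z\})^{N'}$ gives continuity of $\Phi_j$ at $\Lambda_0$. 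As $\Lambda_0$ was arbitrary, $\Phi_j$ is continuous, which together with the preceding paragraph establishes that it is a homeomorphism. The delicate point throughout is this local factorization through a single Banach piece of the direct limit; once the uniform radius $r$ over a compact parameter set is secured, the remainder reduces to the standard continuous dependence of holomorphic functions on parameters.
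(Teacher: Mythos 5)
Your proof is correct and follows essentially the same route as the paper, whose ``proof'' is simply a citation of Lemma 19 in \cite{dSLR15} as a direct consequence of Theorem \ref{jetparam}: there, as in your argument, $\Phi_j$ and the jet map $j_0^{\mathbf t k_0}$ are shown to be mutually inverse bijections using the two bullet points and \eqref{defEquationJetParam}, continuity of jet extraction follows from Cauchy estimates on the Banach pieces of the inductive limit, and continuity of $\Phi_j$ follows from its holomorphic dependence on $\Lambda$ with a locally uniform radius of convergence. The only difference is one of presentation: you spell out the tube-lemma factorization through a single Banach space $\mathcal{O}(D_{r'})^{N'}$, a detail the paper delegates to the cited reference.
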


This result is proved exactly as Lemma 19 in \cite{dSLR15} as a direct consequence of Theorem \ref{jetparam}.

For each $j \in J$ the restriction of $\Phi_j$ to $\mathcal U_j \cap (\mathbb C^N \times A_j)$ gives rise to a map 
\[A_j \ni \Lambda \to \Phi_j(\Lambda) \in (\mathbb C\{Z\})^{N'}, \ \Phi_j(\Lambda) (Z) = \Phi_j(Z,\Lambda)\]
from $A_j$ to the space $(\mathbb C\{Z\})^{N'}$.

 Let $X\subset A_j$ be any regular (real-analytic) submanifold, and fix $\Lambda_0 \in X$. In what follows we focus on $\Phi_j|_X$. 
 Note that if we restrict to a small enough neighborhood of $\Lambda_0$ in $X$ (which we again denote by $X$)
 the maps $\Phi_j(\Lambda) $ for all $\Lambda\in X$ all have a common radius of convergence $R$, 
 so that we can consider the restriction of $\Phi_j$ to $X$ as a map 
 valued in  the Banach space ${\rm Hol}(\overline{B_R(0)}, \mathbb C^{N'})$, the space of holomorphic mappings from $B_R(0)$ to $\C^{N'}$, which are continuous up to $\overline{B_R(0)}$, where $B_R(0)$ denotes the ball with radius $R >0$ in $\C^N$.

We also remark that the map $\Phi_j: X \to (\mathbb C\{Z\})^{N'}$ is of class $C^\infty$. We consider its Fr\'echet derivative $D \Phi_j(\Lambda_0)$ at $\Lambda_0$ as a map $T_{\Lambda_0}X \to T_{\Phi_j(\Lambda_0)}(\mathbb C\{Z\})^{N'}\cong (\mathbb C\{Z\})^{N'}$.
 
We will need a special subspace of $T_{\Phi_j(\Lambda_0)}(\mathbb C\{Z\})^{N'}$. The following definition, already stated in Section \ref{intro}, was first given in \cite{CH}, see also \cite{dSLR15}.

\begin{definition}\label{def:infdef} Let $M$ and $M'$ be as above and 
$H\colon (\mathbb C^N,0) \to (\mathbb C^{N'},0)$ a map with $H(M) \subset M'$. Then a vector 
\[V =  \sum_{j=1}^{N'} \alpha_j(Z)\frac{\partial}{\partial Z_j'} \in T_{H}(\mathbb C\{Z\})^{N'}\] 
is an \emph{infinitesimal deformation of $H$} if the real part of $V$ is tangent to $M'$ along $H(M)$, i.e.\ if 
for one (and hence every) defining function $\rho'=(\rho'_1,\ldots, \rho'_{d'})$ of $M'$, the components of $V$ satisfy the following linear system 
\begin{equation}\label{infdef}
{\rm Re}\left(\sum_{j=1}^{N'}\alpha_j(Z)\frac{\partial \rho'_{\ell}}{\partial Z_j'}(H(Z),\overline {H(Z)})\right)=0 \ \ {\rm for} \ Z\in M, \ \ell = 1, \ldots, d'. 
\end{equation}
We denote this subspace of $T_{H}(\mathbb C\{Z\})^{N'}$ by $\mathfrak {hol}_0 (H)$.
\end{definition}

With the same proof as in \cite{dSLR15} we derive the following property, which motivates the definition above:
\begin{lemma}\label{contain}
The image of $T_{\Lambda_0}X$ by $D\Phi_j(\Lambda_0)$ is contained in $\mathfrak {hol}_0 (\Phi_j(\Lambda_0))$.
\end{lemma}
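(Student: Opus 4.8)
The plan is to realize the vector $V := D\Phi_j(\Lambda_0)(v)$, for an arbitrary tangent vector $v \in T_{\Lambda_0}X$, as the velocity of a curve of honest maps sending $M$ into $M'$, and then to differentiate the mapping equation. Since $X$ is a regular real-analytic submanifold, I would first choose a smooth curve $\gamma\colon(-\varepsilon,\varepsilon)\to X$ with $\gamma(0)=\Lambda_0$ and $\dot\gamma(0)=v$, and set $H_t := \Phi_j(\gamma(t))$. Because $\gamma(t)\in X\subset A_j = j_0^{\mathbf t k_0}(\mathcal F_{k_0,j})$, Lemma \ref{PhiHomo} guarantees that each $H_t$ lies in $\mathcal F_{k_0,j}\subset\mathcal H(M,M')$; writing $H_t=(H_{t,1},\dots,H_{t,N'})$, each $H_t$ therefore solves the mapping equation
\[ \rho'_\ell(H_t(Z),\overline{H_t(Z)})=0,\qquad Z\in M,\ \ell=1,\dots,d'. \]

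Next I would differentiate this identity in $t$ at $t=0$. After shrinking $X$ about $\Lambda_0$ so that all $H_t$ share a common radius of convergence $R$ and $t\mapsto H_t$ is a curve in the Banach space $\mathrm{Hol}(\overline{B_R(0)},\C^{N'})$, the $t$-derivative can be computed there; point evaluation at $Z\in M\cap B_R(0)$ is a bounded linear functional and hence commutes with $d/dt$, while the chain rule for the Fr\'echet derivative gives $\dot H_0 = D\Phi_j(\Lambda_0)(v) = V$, with components $\alpha_m := V_m$. As $\rho'$ is a real-valued defining function, differentiating the composition yields, for $Z\in M$,
\[ \sum_{m=1}^{N'}\frac{\partial\rho'_\ell}{\partial Z_m'}(H_0(Z),\overline{H_0(Z)})\,\alpha_m(Z) + \sum_{m=1}^{N'}\frac{\partial\rho'_\ell}{\partial\bar Z_m'}(H_0(Z),\overline{H_0(Z)})\,\overline{\alpha_m(Z)}=0. \]
Since each $\rho'_\ell$ is real, the two sums are complex conjugates of one another, so this is precisely $2\,\real\bigl(\sum_{m}\alpha_m\,\partial\rho'_\ell/\partial Z_m'\bigr)=0$, i.e.\ condition \eqref{infdef} defining $\mathfrak {hol}_0(H_0)$. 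As $H_0=\Phi_j(\Lambda_0)$, this shows $V\in\mathfrak {hol}_0(\Phi_j(\Lambda_0))$, which is the assertion.

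The main difficulty here is not conceptual but lies in the functional-analytic bookkeeping: one must check that $t\mapsto H_t$ is genuinely differentiable as a Banach-space-valued curve and that the $t$-derivative commutes with the substitution $Z'=H_t(Z)$ in $\rho'$. Both are taken care of once the $H_t$ are confined to a fixed ball $\overline{B_R(0)}$ (possible after shrinking $X$, as recalled before the statement) and the evaluation functionals are recognized as continuous; the interchange is then the ordinary chain rule, and no analytic input beyond $\Phi_j\in C^\infty$ is required. The argument is the same as the corresponding step in \cite{dSLR15}.
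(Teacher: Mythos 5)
Your proof is correct and is essentially the argument the paper relies on: the paper defers to \cite{dSLR15}, where the proof is exactly this --- realize $D\Phi_j(\Lambda_0)(v)$ as the velocity of the curve of maps $H_t=\Phi_j(\gamma(t))\in\mathcal H(M,M')$ (valid by Lemma \ref{PhiHomo}) and differentiate the mapping equation in $t$, using the realness of $\rho'$ to obtain condition \eqref{infdef}. The functional-analytic points you flag (common radius of convergence, continuity of evaluation functionals, smoothness of $\Phi_j$) are precisely the ones the paper sets up in the paragraphs preceding the lemma, so no gap remains.
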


The next lemmas give some properties of infinitesimal deformations that will be needed in section \ref{proof} to give the proofs our main theorems.
The first lemma comes from the jet parametrization for solutions of  \eqref{infdef} obtained in section 5 in \cite{dSLR15}. Its proof is precisely the one of Cor. 32 in \cite{dSLR15} using Prop. 29 instead of Prop. 31 and keeping $Q$ fixed.

\begin{lemma}\label{semicont}
Fix $M$ and $M'$ given as above. For any $H \in \mathcal F_{k_0}$, the dimension $\dim_{\R}(\mathfrak{hol}_0(H))$ of the space of infinitesimal deformations of $H$ is finite. Moreover, the function $\dim_{\R}(\mathfrak{hol}_0(\cdot)): \mathcal F_{k_0} \to \N_0$ is upper semicontinuous, i.e.\ for any $H \in \mathcal F_{k_0}$ there exists a neighborhood $\mathcal V$ of $H$ in $\mathcal F_{k_0}$ such that for any $H'\in \mathcal V$ we have $\dim_{\R}(\mathfrak{hol}_0(H'))\leq \dim_{\R}(\mathfrak{hol}_0(H))$.
\end{lemma}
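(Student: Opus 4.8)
The plan is to mirror the proof of the jet parametrization in Theorem \ref{jetparam}, but applied to the \emph{linear} system \eqref{infdef} in place of the mapping equation \eqref{mapeq}. The key point is that, just as a finitely nondegenerate map is recovered from a finite jet, an infinitesimal deformation $V = \sum_j \alpha_j(Z)\partial/\partial Z_j'$ of a fixed $H \in \mathcal F_{k_0}$ is determined, through a \emph{linear} parametrization formula, by a finite jet $j_0^{\mathbf t k_0} V$ at the origin. Once such a formula $V(Z) = \Xi(Z, j_0^{\mathbf t k_0} V)$ is available, with $\Xi$ depending linearly on the jet variable and real-analytically on the jet of $H$, finiteness of $\dim_\R \mathfrak{hol}_0(H)$ is immediate: the jet evaluation $V \mapsto j_0^{\mathbf t k_0} V$ is then injective on $\mathfrak{hol}_0(H)$, so it embeds $\mathfrak{hol}_0(H)$ as a real subspace of the finite-dimensional (real) jet space.

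To produce the parametrization I would proceed exactly as in Lemma \ref{lem:basicIdentity} through Corollary \ref{cor:iterationSegre}, with the mapping equation replaced by the complexification of \eqref{infdef}. Writing \eqref{infdef} on the complexification $\mathcal M$ (so that $\overline{H(Z)}$ becomes $\bar H(\zeta)$ and $\overline{\alpha(Z)}$ becomes $\bar\alpha(\zeta)$) yields an identity linear in $\alpha$ and $\bar\alpha$ whose coefficients are built from derivatives of $\rho'$ evaluated at $(H(Z),\bar H(\zeta))$. Differentiating this identity along the CR vector fields $L_j$ and invoking the $k_0$-nondegeneracy of $H$ — precisely the nonvanishing of the determinant $s$ of \eqref{folcon} — lets one solve for $\alpha(Z)$ in terms of the jet of $\bar\alpha$ up to order $k_0$ along $\zeta$, giving the linear analogue of \eqref{basicIdentity}. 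Differentiating further produces the analogue of Lemma \ref{lem:derivBasicIdentity}, and evaluating along the Segre sets and iterating (using that $M$ is minimal, so $S_0^{\mathbf t}$ is generically of full rank) yields, as in Corollary \ref{cor:iterationSegre}, the linear parametrization $V(Z) = \Xi(Z, j_0^{\mathbf t k_0} V)$ valid for every infinitesimal deformation of $H$. This is the content of Prop.~29 of \cite{dSLR15}, applied with $M$ (equivalently $Q$) held fixed.

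For upper semicontinuity I would substitute the parametrized form $V = \Xi(Z,\Lambda^V)$ back into \eqref{infdef} and expand in powers of $(z,\chi,\tau)$ on $\mathcal M$. Since $\Xi$ is linear in the jet variable $\Lambda^V$ and its coefficients depend continuously on $\Lambda := j_0^{\mathbf t k_0} H$ and $\bar\Lambda$, setting each coefficient of the expansion to zero produces a finite $\R$-linear system
\[
N(\Lambda,\bar\Lambda)\,\Lambda^V = 0
\]
in the jet unknown $\Lambda^V$, whose real coefficient matrix $N(\Lambda,\bar\Lambda)$ depends continuously on $\Lambda$. By the parametrization, $\dim_\R \mathfrak{hol}_0(H) = \dim_\R \ker N(\Lambda,\bar\Lambda)$. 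The rank of a matrix is lower semicontinuous in its entries, so $\dim_\R \ker N$ is upper semicontinuous; since $H \mapsto j_0^{\mathbf t k_0} H = \Lambda$ is continuous on $\mathcal F_{k_0}$, the composition $\dim_\R \mathfrak{hol}_0(\cdot)$ is upper semicontinuous, which is the claim.

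The main obstacle is the first step: verifying that $k_0$-nondegeneracy of $H$ alone suffices to solve the complexified system \eqref{infdef} for the first-order jet of $\alpha$, and then carrying the Segre-set iteration through while tracking the coupling between $\alpha$ and its conjugate $\bar\alpha$ that the real-part condition forces. One must also ensure that the resulting $\Xi$, and hence $N$, can be taken to depend continuously on $j_0^{\mathbf t k_0}H$ \emph{uniformly} over a neighborhood of $H$ in $\mathcal F_{k_0}$ — that is, that a single choice of nondegeneracy data (a single index $j$ in the notation of Theorem \ref{jetparam}) and a single jet order $\mathbf t k_0$ work throughout that neighborhood. This is exactly where the openness built into $\mathcal F_{k_0}$ and the homeomorphism property of Lemma \ref{PhiHomo} are used, and it is what makes the rank argument applicable locally.
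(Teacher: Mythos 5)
Your proposal is correct and takes essentially the same route as the paper: the paper's proof is precisely a citation of the jet parametrization for solutions of \eqref{infdef} from Section 5 of \cite{dSLR15} (Prop.~29 there, applied with $Q$ held fixed) combined with the argument of Cor.~32 of \cite{dSLR15}, which is exactly the linear parametrization $V(Z)=\Xi(Z,j_0^{\mathbf t k_0}V)$ plus the kernel-dimension semicontinuity argument you reconstruct. Your identification of the determinant \eqref{folcon} as the coefficient matrix that lets one solve the complexified linear system, and of the rank lower-semicontinuity step, matches the cited proof.
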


The following lemma follows from Theorem \ref{jetparam}, Lemma \ref{contain} and Lemma \ref{semicont} with the same proof as Lemma 23 in \cite{dSLR15}.

\begin{lemma} \label{dim10}
 Let $\Lambda_0\in A_j$,  and suppose that
  $\dim_{\mathbb R} \mathfrak {hol}_0 (\Phi_j(\Lambda_0)) = \ell$.
 Then there exists a neighborhood $U$ of $\Lambda_0$ in $J_0^{\mathbf t k_0}$ such that, 
 if $X\subset A_j$ is a submanifold such that $X\cap U\neq \emptyset$, then $\dim_{\mathbb R}(X) \leq \ell$.
\end{lemma}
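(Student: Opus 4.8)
The plan is to bound $\dim_{\mathbb R}(X)$ pointwise by the dimension of the corresponding space of infinitesimal deformations, and then use the upper semicontinuity from Lemma \ref{semicont} to control the latter uniformly near $\Lambda_0$. The starting observation, which drives everything, is that $\Phi_j$ is a genuine section of the jet map: by the second bullet of Theorem \ref{jetparam} we have $H = \Phi_j(j_0^{\mathbf{t} k_0}H)$ for every $H \in \mathcal F_{k_0,j}$, and since $\Phi_j\colon A_j \to \mathcal F_{k_0,j}$ is a bijection by Lemma \ref{PhiHomo}, its inverse is exactly the restriction of $j_0^{\mathbf{t} k_0}$; in particular $j_0^{\mathbf{t} k_0}\circ \Phi_j = \id_{A_j}$. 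The crucial extra input is that the jet evaluation $j_0^{\mathbf{t} k_0}$ is the restriction of a continuous \emph{linear} map on the Banach space ${\rm Hol}(\overline{B_R(0)},\mathbb C^{N'})$ introduced above (boundedness follows from the Cauchy estimates), so it coincides with its own Fr\'echet derivative.

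Now I would fix any submanifold $X\subset A_j$ and a point $\Lambda\in X$, and restrict the identity $j_0^{\mathbf{t} k_0}\circ \Phi_j = \id$ to $X$, obtaining $j_0^{\mathbf{t} k_0}\circ (\Phi_j|_X) = \iota_X$, the inclusion $X \hookrightarrow J_0^{\mathbf{t} k_0}$. Differentiating at $\Lambda$ via the chain rule gives $j_0^{\mathbf{t} k_0}\circ \bigl(D\Phi_j(\Lambda)|_{T_\Lambda X}\bigr) = D\iota_X(\Lambda)$, which is the injective inclusion $T_\Lambda X \hookrightarrow J_0^{\mathbf{t} k_0}$. Hence $D\Phi_j(\Lambda)|_{T_\Lambda X}$ is injective. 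Combining this with Lemma \ref{contain}, which places $D\Phi_j(\Lambda)(T_\Lambda X)$ inside $\mathfrak{hol}_0(\Phi_j(\Lambda))$, yields
\[
\dim_{\mathbb R}(X) = \dim_{\mathbb R} T_\Lambda X \leq \dim_{\mathbb R}\mathfrak{hol}_0(\Phi_j(\Lambda)).
\]

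It remains to pin down $U$. By Lemma \ref{semicont} there is a neighborhood $\mathcal V$ of $\Phi_j(\Lambda_0)$ in $\mathcal F_{k_0}$ on which $\dim_{\mathbb R}\mathfrak{hol}_0(\cdot)\leq \ell$. Since $\Phi_j$ is a homeomorphism, $\Phi_j^{-1}(\mathcal V)$ is open in $A_j$, hence equals $U\cap A_j$ for some open $U\subset J_0^{\mathbf{t} k_0}$ containing $\Lambda_0$. Then for any submanifold $X\subset A_j$ with $X\cap U\neq\emptyset$ I pick $\Lambda\in X\cap U\subset A_j\cap U$; since $\Phi_j(\Lambda)\in\mathcal V$, the pointwise estimate above gives $\dim_{\mathbb R}(X)\leq \dim_{\mathbb R}\mathfrak{hol}_0(\Phi_j(\Lambda))\leq \ell$, which is the assertion.

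The main obstacle, and the only genuinely nontrivial step, is the injectivity of $D\Phi_j(\Lambda)$ on the tangent spaces $T_\Lambda X$; everything else is bookkeeping with the already-established Lemmas \ref{contain} and \ref{semicont}. That injectivity is secured entirely by the section identity $j_0^{\mathbf{t} k_0}\circ \Phi_j = \id$ together with the chain rule, so the care needed is mainly to confirm that the Fr\'echet-differentiability set-up (valuing $\Phi_j$ in ${\rm Hol}(\overline{B_R(0)},\mathbb C^{N'})$, as arranged above) is compatible with post-composition by the bounded linear jet evaluation, making the chain-rule application legitimate. This is the analogue of Lemma 23 in \cite{dSLR15}, and the argument carries over verbatim.
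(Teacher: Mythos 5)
Your proposal is correct and follows essentially the same route as the paper, whose proof of Lemma \ref{dim10} is precisely the combination of Theorem \ref{jetparam} (the section identity $j_0^{\mathbf{t}k_0}\circ\Phi_j=\id$, giving injectivity of $D\Phi_j$ on tangent spaces), Lemma \ref{contain} (image inside $\mathfrak{hol}_0$), and Lemma \ref{semicont} (upper semicontinuity), carried out as in Lemma 23 of \cite{dSLR15}. Your reconstruction of that argument, including the Banach-space setting that legitimizes the chain rule against the bounded linear jet evaluation, is accurate and complete.
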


\section{Properties of the group action}
\label{s:propiso}

In this section we deduce some results which will be used to prove Theorem \ref{suffcon2intro}. Thus we consider strictly pseudoconvex hypersurfaces $M \subset \C^N$ and $M' \subset \C^{N'}$. In the coordinates introduced in section \ref{prelim} this means that the CR-dimension of $M$ and $M'$ are equal to $n=N-1$ and $n'=N'-1$ respectively (and $d=d'=1$).

More specifically we are interested in describing some properties of the action of the isotropy group on $2$-nondegenerate embeddings, or more precisely on the set of their $4$-jets (which by Theorem \ref{jetparam} parametrize $\mathcal F_2$). To this end we first give a brief description of the isotropy groups of the spheres $\mathbb H^{n+1}=\{(z,w) \in \C^{n+1}: {\rm Im}\, w =\|z\|^2\}$. Let $\Gamma_n = \mathbb R^+ \times \mathbb R  \times U(n) \times \mathbb C^n$ be a parameter space. Then the map
\begin{equation}\label{param}
\Gamma_n \ni \gamma =  (\lambda, r, U, c) \to \sigma_{\gamma}(z,w) = \frac{(\lambda U \ {}^t(z + c w), \lambda^2 w) }{1-2i \langle \overline c, z \rangle + (r - i \|c\|^2)w} \in {\rm Aut_0}(\mathbb H^{n+1})
\end{equation}
is a diffeomorphism between $\Gamma_n$ and ${\rm Aut_0}(\mathbb H^{n+1})$: here we denote by $\langle \cdot, \cdot \rangle$ the product on $\mathbb C^n$ given by $\langle z, \widetilde z \rangle=  \sum_{j=1}^n z_j\widetilde z_j$ and we write $\| z \|^2 = \langle \overline z, z \rangle$.

The first property we are going to study is properness: we remind the reader that the action of a topological group $\mathcal G$ on a space $X$ is said to be \emph{proper} if the map $\mathcal G\times X \to X \times X$ given by $(g,x) \mapsto (x,gx)$ is proper.

We will actually prove properness of the action on a particular subset of $J_0^4$: let $E$ be the subset of $J_0^4$ defined by  
\[E=\left\{\Lambda_{N'}^{\alpha,0} = 0, |\alpha|\leq 2,\  0 \neq \Lambda_{N'}^{0,1} = \|\Lambda'^{\beta,0}\|^2, |\beta|=1,\left\langle \overline\Lambda'^{\gamma,0}, \Lambda'^{\delta,0} \right\rangle = 0, \gamma \neq \delta, |\gamma|=|\delta|=1 \right \}.\]

One can verify in a straightforward manner the following properties of $E$:
\begin{itemize}
\item $E$ is a (real algebraic) submanifold of $J_0^4$.
\item For $M=\{{\rm Im}\, w = \|z\|^2 +O(2)\}$ and $M'=\{{\rm Im}\, w' = \|z'\|^2 + O(2)\}$ the set $E$ contains the $4$-th jet of any non-constant map from $M$ to $M'$.
\item $E$ is invariant under the action of ${\rm Aut}_0(\mathbb H^N) \times {\rm Aut}_0(\mathbb H^{N'})$, cf. Lemma 14 in \cite{dSLR15}.
\end{itemize}

\begin{lemma}\label{proact}
Suppose that $M\not \cong \mathbb H^{N}$ is strictly pseudoconvex and $M'=\mathbb H^{N'}$. Then the action of $G =  {\rm Aut}_0 (M) \times{\rm Aut}_0 (\mathbb H^{N'})$ on $E$ is proper.
\end{lemma}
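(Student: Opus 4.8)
The plan is to verify properness through its sequential form: suppose $\Lambda_m \to \Lambda$ and $g_m \cdot \Lambda_m \to \Lambda_\infty$ with all points in $E$ and $g_m = (\sigma_m, \sigma'_m) \in G$; I must extract a convergent subsequence of $g_m$ (here $E$ and $G$ are finite-dimensional manifolds, so this criterion is equivalent to properness). The decisive structural input is that, since $M$ is strictly pseudoconvex and $M \not\cong \mathbb{H}^N$, the isotropy group ${\rm Aut}_0(M)$ is \emph{compact} — this is the classical consequence of Chern--Moser theory, that the stability group of a non-spherical strictly pseudoconvex hypersurface embeds (via its $2$-jet) as a compact subgroup of the linear isotropy of the sphere. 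Consequently $\sigma_m$ already subconverges to some $\sigma_\infty \in {\rm Aut}_0(M)$, and passing to this subsequence and setting $\tilde\Lambda_m \coloneqq \Lambda_m \circ \sigma_m^{-1}$ (well defined on $4$-jets by the chain rule) we get $\tilde\Lambda_m \to \tilde\Lambda \coloneqq \Lambda \circ \sigma_\infty^{-1}$. Because $\sigma_\infty^{-1}$ is an invertible germ fixing $0$ that respects the weighted (sphere-osculating) structure, it multiplies the coefficient $\Lambda_{N'}^{0,1}$ by a nonzero factor, so $\Lambda_{N'}^{0,1}(\tilde\Lambda) \neq 0$; this nonvanishing is what drives everything below. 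It remains to show that $\sigma'_m \cdot \tilde\Lambda_m \to \Lambda_\infty$ with $\tilde\Lambda_m$ convergent forces the parameter $\gamma'_m = (\lambda'_m, r'_m, U'_m, c'_m) \in \Gamma_{n'}$ of $\sigma'_m$ to remain in a compact subset of $\Gamma_{n'}$, which by the diffeomorphism \eqref{param} yields subconvergence of $\sigma'_m$.

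I would recover the four parameters one at a time by tracking how post-composition by $\sigma_{\gamma'}$ in \eqref{param} acts on the low-order jet coefficients occurring in the definition of $E$, expanding the fractional-linear map by weight ($z$ of weight $1$, $w$ of weight $2$). Writing a competitor as $H = (F, g)$ with $F = F_z z + F_w w + \cdots$ and $g = a\, w + \cdots$, where $a = \Lambda_{N'}^{0,1} \neq 0$ and $g$ has no pure-$z$ terms up to order $2$ by the $E$-conditions, the computation proceeds as follows. The unitary factor $U'_m \in U(n')$ subconverges automatically. The coefficient $\Lambda_{N'}^{0,1}$ of the $w'$-component transforms by the factor $\lambda'^2$; since it starts from $a_m \to a \neq 0$ and lands on the finite (and, since $\Lambda_\infty \in E$, nonzero) value $\Lambda_{N'}^{0,1}(\Lambda_\infty)$, we conclude $\lambda'^2_m$ tends to a finite positive limit, so $\lambda'_m$ converges. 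The linear-in-$w$ part of the new $z'$-component equals $\lambda'\, U'\, {}^t(F_w + c'\, g_w)$ with $g_w = a \neq 0$; with $\lambda', U'$ now controlled and the left-hand side convergent, $c'_m$ is determined and converges. Finally, the coefficient of $w^2$ in the new $w'$-component has the form $\lambda'^2\big(b - a^2(r' - i\|c'\|^2) + (\text{terms in } F_w, c')\big)$, and since $a \neq 0$ and all other quantities are now bounded, this yields convergence of $r'_m$.

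Combining the two halves, $(\sigma_m, \sigma'_m)$ subconverges in $G$, establishing properness. The main obstacle I anticipate is twofold. First, cleanly invoking (or, if the earlier references do not supply it verbatim, establishing) the compactness of ${\rm Aut}_0(M)$ from non-sphericity, together with the verification that precomposition by the now-convergent $\sigma_m^{-1}$ preserves the nonvanishing of $\Lambda_{N'}^{0,1}$ so that the subsequent computation is legitimate. Second, carrying out the weight-graded expansion of \eqref{param} carefully enough to confirm that each of $\lambda', c', r'$ appears \emph{non-degenerately} in the indicated jet coefficient, with the nonvanishing of $a = \Lambda_{N'}^{0,1}$ serving as the crucial nonzero denominator at every stage; this bookkeeping is elementary but must be done in the correct order ($\lambda'$, then $c'$, then $r'$), since each step uses the control already gained on the previous parameters.
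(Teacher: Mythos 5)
Your proposal is correct and follows essentially the same route as the paper: reduce via compactness of ${\rm Aut}_0(M)$ (which the paper draws from Beloshapka--Vitushkin rather than Chern--Moser) to controlling $\sigma'_m$ alone, then recover $\lambda'$, $c'$, and $r'$ in that order from the first and second jet coefficients of the composed map, with the nonvanishing of $\Lambda_{N'}^{0,1}$ on $E$ as the key nondegeneracy at each step. The only differences are cosmetic (sequential versus uniformly-bounded formulation of properness, and immaterial constants in the $c'$ and $r'$ equations).
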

\begin{proof}
The proof follows closely the one of \cite[Lemma 15]{dSLR15}. With the same argument as there using the compactness of ${\rm Aut}_0(M)$, which follows from the assumption that $M\not\cong \mathbb H^N$ (see \cite{BV}), we can reduce to showing the following: let $C>1$, and $\{(\Lambda_m, \widetilde \Lambda_m)\}_{m\in\N} \subset E  \times E$, $(\sigma'_m)_{m\in \N} \subset {\rm Aut}_0(\mathbb H^{N'})$ be sequences such that $|\Lambda_m|, |\widetilde \Lambda_m|\leq C$, $|(\Lambda_m)_{N'}^{0,1}|, |(\widetilde \Lambda_m)_{N'}^{0,1}|\geq 1/C$  and \begin{equation}\label{e:compose}\widetilde \Lambda_m = \sigma'_m \circ \Lambda_m \end{equation} for all $m\in \N$. Then $\sigma'_m$ admits a convergent subsequence. 

Using the parametrization (\ref{param}), this amounts to showing that the preimage $\{\gamma'_m=(\lambda'_m, r'_m, U'_m, c'_m)\}$ of $\sigma'_m$ in the parameter space $\Gamma'$ is relatively compact, that is $|r'_m|, \|c'_m\|, \lambda'_m$ and $1/\lambda'_m$ are bounded, since $U(N'-1)$ is a compact group.

Looking at the $N'$-th component of the first jet of \eqref{e:compose}, we get
\[{\lambda'_m}^2(\Lambda_m)_{N'}^{0,1} = (\widetilde \Lambda_m)_{N'}^{0,1},\]
and hence $\lambda'_m$ and $1/\lambda'_m$ are bounded.


Considering the first $N'-1$ components of the first jet of \eqref{e:compose} we obtain
\[\lambda_m' U_m' \ \Lambda_m'^{0,1} +(\Lambda_m)_{N'}^{0,1}\ c_m' = \widetilde \Lambda_m'^{0,1},\]
hence
\begin{align*}
\|c_m'\| \leq \frac{1}{|(\Lambda_m)_{N'}^{0,1}|}  \left\| \widetilde \Lambda_m'^{0,1} - \lambda_m' U_m' \ \Lambda_m'^{0,1}Ê\right\|, 
\end{align*}
which implies that $c_m'$ is bounded in $\mathbb C^{N'-1}$.  Finally, we consider the $N'$-th component of the second jet of \eqref{e:compose}, which gives us
\[(\widetilde \Lambda_m)_{N'}^{0,2} = -2 {\lambda'}_m^2 \left((\Lambda_m)_{N'}^{0,1}\right)^2 r'_m + R_m,\]
where $R_m$ is a polynomial expression in the second jet of $\Lambda_m$, in $\lambda'_m$ and in the coefficients of $c'_m$ and $U'_m$ (but which does not depend on $r'_m$). This shows that the sequence $r'_m$ is bounded in $\mathbb R$, and concludes the proof.\end{proof}

Next, we consider the case $M=\mathbb H^N$ and $M' \not\cong \mathbb H^{N'}$. The proof is quite similar to the previous one, but does not really reduce to it.

\begin{lemma}\label{proact2}
The action of ${\rm Aut}_0 (\mathbb H^N) \times {\rm Aut}_0 (M')$ on $E$ is proper.
\end{lemma}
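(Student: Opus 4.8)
The plan is to mirror the proof of Lemma \ref{proact}, interchanging the roles of source and target. Since $M'\not\cong\mathbb{H}^{N'}$ is strictly pseudoconvex, the group ${\rm Aut}_0(M')$ is compact (see \cite{BV}); the non-compact factor is now ${\rm Aut}_0(\mathbb{H}^N)$, which I would treat explicitly through the parametrization \eqref{param}. As in Lemma \ref{proact}, it suffices to show the following: given $C>1$, sequences $\{(\Lambda_m,\widetilde\Lambda_m)\}\subset E\times E$ with $|\Lambda_m|,|\widetilde\Lambda_m|\leq C$ and $|(\Lambda_m)_{N'}^{0,1}|,|(\widetilde\Lambda_m)_{N'}^{0,1}|\geq 1/C$, together with $(\sigma_m,\sigma'_m)\in{\rm Aut}_0(\mathbb{H}^N)\times{\rm Aut}_0(M')$ such that $\widetilde\Lambda_m=\sigma'_m\circ\Lambda_m\circ\sigma_m^{-1}$ (at the level of $4$-jets), the sequence $(\sigma_m,\sigma'_m)$ has a convergent subsequence. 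By compactness of ${\rm Aut}_0(M')$ I pass to a subsequence along which $\sigma'_m$ converges; in particular its jets are bounded, and since $\sigma'_m$ fixes $0$ and preserves the complex tangent space of $M'$, its linear part is block upper-triangular with last row $(0,q_m)$, where $q_m>0$ stays bounded away from $0$ and $\infty$. It then remains to show that the parameters of $\tau_m\coloneqq\sigma_m^{-1}=\sigma_{(a_m,s_m,V_m,b_m)}$ given by \eqref{param} stay in a compact subset of $\Gamma_{N-1}$, i.e.\ that $a_m,1/a_m,\|b_m\|$ and $|s_m|$ are bounded (the factor $V_m$ lies in the compact group $U(N-1)$).

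First I would record the consequences of $\Lambda_m,\widetilde\Lambda_m\in E$ for linear parts: the defining relations of $E$ force the $w'$-row of the linear part of $\Lambda_m$ to be $(0,\nu_m)$ with $\nu_m=(\Lambda_m)_{N'}^{0,1}\in[1/C,C]$, and the $z'$-block $A_m$ to satisfy $A_m^*A_m=\nu_m I$; the same holds for $\widetilde\Lambda_m$. A short computation from \eqref{param} shows that the linear part of $\tau_m$ is $\begin{pmatrix} a_mV_m & a_mV_mb_m \\ 0 & a_m^2\end{pmatrix}$. Comparing the $w'$-rows in the linear part of $\widetilde\Lambda_m=\sigma'_m\circ\Lambda_m\circ\sigma_m^{-1}$ yields the scalar identity $(\widetilde\Lambda_m)_{N'}^{0,1}=q_m\nu_m a_m^2$, the exact analogue of the identity used in Lemma \ref{proact} to bound $\lambda'_m$; since all factors on the right except $a_m^2$ are bounded above and below, this bounds $a_m$ and $1/a_m$. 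Comparing next the $w\to z'$ blocks, the vector $b_m$ appears through the left-invertible operator $\lambda'_m U'_m A_m a_m V_m$ (a bounded left inverse is built from $\tfrac1{\nu_m}A_m^*$, using $A_m^*A_m=\nu_m I$), with all remaining terms bounded; this bounds $\|b_m\|$.

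Finally, to bound $s_m$ I would pass to the second jet, following the last step of Lemma \ref{proact}. Setting $z=0$ and expanding $\tau_m(0,w)$, then $\Lambda_m$, then $\sigma'_m$ to order two, the coefficient $(\widetilde\Lambda_m)_{N'}^{0,2}$ of $w^2$ in the $N'$-th component of $\sigma'_m\circ\Lambda_m\circ\tau_m$ takes the form
\begin{equation*}
(\widetilde\Lambda_m)_{N'}^{0,2}=-q_m\nu_m a_m^2\,s_m+i\,q_m\nu_m a_m^2\|b_m\|^2+q_m(\Lambda_m)_{N'}^{0,2}a_m^4+R_m,
\end{equation*}
where $R_m$ collects the contributions of the $(1,1)$-part of $\Lambda_m$ and of the quadratic part of $\sigma'_m$; it is a polynomial in quantities already shown to be bounded and, crucially, does not depend on $s_m$. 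Taking real parts (recall $q_m,\nu_m,a_m,s_m\in\mathbb{R}$) isolates $-q_m\nu_m a_m^2 s_m$; since $(\widetilde\Lambda_m)_{N'}^{0,2}$ is bounded, ${\rm Re}\,R_m$ is bounded, and the coefficient $q_m\nu_m a_m^2$ is bounded away from $0$, we conclude that $s_m$ is bounded. Hence $\tau_m$, and therefore $\sigma_m$, lies in a compact subset of ${\rm Aut}_0(\mathbb{H}^N)$, which together with the convergence of $\sigma'_m$ proves properness.

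The main obstacle is exactly this last computation: carrying the $w^2$-coefficient correctly through the threefold composition while verifying that $s_m$ enters precisely once, linearly, with a coefficient that is uniformly nonzero, and that every other term — in particular those coming from the quadratic part of the non-explicit automorphism $\sigma'_m$ — is bounded independently of $s_m$. The secondary subtle point is justifying the triangular normal form of the linear part of $\sigma'_m$ with $q_m$ bounded below, which rests on $\sigma'_m\in{\rm Aut}_0(M')$ preserving the CR tangent space at $0$ together with the compactness supplied by \cite{BV}.
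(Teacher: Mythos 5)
Your proposal is correct and follows essentially the same route as the paper's proof: reduce via compactness of ${\rm Aut}_0(M')$ (from \cite{BV}) to controlling the sphere factor, then bound the dilation from the $w'$-row of the first jet, bound the translation parameter $c$ (your $b_m$) using the semi-unitarity of the $z'z$-block forced by the definition of $E$, and bound the $r$-parameter from the $w^2$-coefficient of the $N'$-th component of the second jet. The only differences are bookkeeping: the paper absorbs the convergent $\sigma'_m$ into $\widetilde\Lambda_m$ at the outset and works directly with $\widetilde\Lambda_m=\Lambda_m\circ\sigma_m$, whereas you carry $\sigma'_m$ through the computation explicitly — in doing so, note that its linear $z'$-block is a general bounded invertible matrix (controlled by compactness), not of the form $\lambda'_m U'_m$ as your notation, borrowed from the sphere parametrization \eqref{param}, suggests.
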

\begin{proof} 
By the compactness of ${\rm Aut}_0 (M')$ as in the previous lemma it is enough to show the following: let $C>1$, and let $\{(\Lambda_m,\widetilde\Lambda_m)\}_{m\in\N} \subset E \times E$, $(\sigma_m)_{m\in \N} \subset {\rm Aut}_0 (\mathbb H^N)$ be sequences such that $|\Lambda_m|, |\widetilde \Lambda_m|\leq C$, $|(\Lambda_m)_{N'}^{0,1}|, |(\widetilde \Lambda_m)_{N'}^{0,1}|\geq 1/C$ and \begin{equation}\label{e:compose2}\widetilde\Lambda_m = \Lambda_m \circ \sigma_m\end{equation} for all $m\in \N$. Then $\sigma_m$ admits a convergent subsequence. 

The $N'$-th component of the first jet of \eqref{e:compose2} gives
\[\lambda_m^2(\Lambda_m)_{N'}^{0,1} = (\widetilde \Lambda_m)_{N'}^{0,1},\]
which implies that the sequence $\lambda_m$ is bounded above and below.

Given $\Lambda \in J_0^4$ we denote by $\Lambda'^{1,0}$ the $(N'-1)\times (N-1)$-matrix given by $(\Lambda_j'^{\alpha,0}), j=1,\ldots,N'-1, \alpha \in \N_0^{N-1}$ with $|\alpha| = 1$. The first $N'-1$ components of (the $(0,1)$-part of) the first jet of \eqref{e:compose2} can then be written as follows:
\[\lambda_m  \left (\lambda_m \Lambda_m'^{0,1} +   \Lambda_m'^{1,0} U_m c_m \right)= \widetilde \Lambda_m'^{0,1}  , \]
therefore
\[ \frac{1}{\lambda_m } \widetilde \Lambda_m'^{0,1} - \lambda_m \Lambda_m'^{0,1}=  \Lambda_m'^{1,0} U_m c_m. \]
By definition of $E$ we can write the matrix $\Lambda_m'^{1,0}$ as $\sqrt{(\Lambda_m)_{N'}^{0,1}} A_m$, where $A_m$ is a semi-unitary matrix, i.e. ${}^t \overline A_m A_m = I_{N-1}$, thus we have 
\begin{align*}
\| \Lambda_m'^{1,0} U_m c_m \|^2 = (\Lambda_m)_{N'}^{0,1} \| A_m U_m c_m\|^2 = (\Lambda_m)_{N'}^{0,1} \| U_m c_m \|^2 = (\Lambda_m)_{N'}^{0,1} \|c_m \|^2,
\end{align*}
so that by the estimate on $(\Lambda_m)_{N'}^{0,1}$ it holds that
\[ \frac{\|c_m\| }{\sqrt C}\leq  \lambda_m \left \|\ \Lambda_m'^{0,1} \right \| +\frac{1}{\lambda_m } \left \| \widetilde \Lambda_m'^{0,1}\right \|,\]
which implies the boundedness of $c_m$ in $\C^{N-1}$. Finally, we consider the $N'$-th component of the second jet of \eqref{e:compose2}, which gives the equation
\[(\widetilde \Lambda_m)_{N'}^{0,2} = -\lambda_m^2 (\Lambda_m)_{N'}^{0,1} r_m + R_m,\]
where $R_m$ is a polynomial expression in the second jet of $\Lambda_m$ and in $\lambda_m$, $c_m$, $U_m$, not depending on $r_m$. This implies that the sequence $r_m$ is bounded in $\mathbb R$, and concludes the proof.
\end{proof}

Next we are going to prove the freeness of the action of the isotropy group of the target manifold. In order to do so, we first introduce for any fixed map $H$, in a way similar to Lemma 17 in \cite{dSLR15}, coordinates such that 
\begin{itemize}
\item the map $H$ is of the form $(z,F(z,w),w)$ for a certain germ of holomorphic function $F: \mathbb C^{N}\to \mathbb C^{\ell}$, where $\ell = N'-N$, such that $F(0) = 0$;
\item the automorphism group of $M'$ at $0$ is a subgroup of ${\rm Aut}_0 (\mathbb H^{N'})$. 
\end{itemize}

\begin{lemma}\label{free}
Let $\Lambda\in E$ be the $4$-jet of a map of the form $(z,w)\to(z,F(z,w),w) \in \mathcal F_2$. Then the stabilizer of $\Lambda$ under the action of $G' = \{\id\} \times {\rm Aut}_0(\mathbb H^{N'})$ is trivial.
\end{lemma}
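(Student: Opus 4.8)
The plan is to realise any element of the stabilizer as $\sigma_{\gamma'}$ with parameter $\gamma'=(\lambda',r',U',c')\in\Gamma_{N'-1}$ through \eqref{param}, and then to pin the four parameters down one after another by comparing the $4$-jets of $\sigma_{\gamma'}\circ H$ and of $H=(z,F,w)$. Since $\Lambda\in E$ and the $\C^{N-1}$-part of the linear term of $H$ is the identity, the defining relations $\|\Lambda'^{\beta,0}\|^2=\Lambda_{N'}^{0,1}=1$ force the linear $z$-part of $F$ to vanish, so I may expand $F=aw+F^{(2)}+F^{(3)}+\cdots$, where $a\in\C^{\ell}$ and $F^{(j)}$ is the homogeneous part of degree $j$. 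I split the target variable $z'\in\C^{N'-1}=\C^{N-1}\oplus\C^{\ell}$ along the $(z,F)$-decomposition of $H$, writing correspondingly $c'=(c'_1,c'_2)$ and viewing $U'$ as a $2\times 2$ block matrix with blocks $A,B,C,E$. Clearing the denominator $D$ of $\sigma_{\gamma'}$, the condition $j_0^4(\sigma_{\gamma'}\circ H)=j_0^4 H$ becomes a family of polynomial identities of the schematic form $(\text{numerator})\equiv H\cdot D$ modulo order $5$, which I read off component by component and degree by degree.

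First I use the $w'$-component. At order one it gives $\lambda'^2=1$, hence $\lambda'=1$. At order two the coefficient of each $z_i w$ forces $c'_1=0$, while the coefficient of $w^2$ yields the single scalar identity $r'=2i\langle\overline{c'_2},a\rangle+i\|c'_2\|^2$ (in particular $r'=0$ once $c'_2=0$). Turning to the $z'$-component at order one, the $\C^{N-1}$-block gives $A=I_{N-1}$ and the $\C^{\ell}$-block gives $C=0$ together with $E(a+c'_2)=a$; unitarity of $U'$ then forces $B=0$ and $E\in U(\ell)$, so that $U'=\operatorname{diag}(I_{N-1},E)$.

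The crux is to show $E=I_{\ell}$. Reading the $\C^{\ell}$-block of the $z'$-component at order two — where the correction coming from $D-1$ is of order $\geq 3$ and hence invisible — gives the identity $E F^{(2)}=F^{(2)}$; equivalently, $E$ fixes every degree-two Taylor coefficient vector of $F$ in $\C^{\ell}$. The main obstacle is precisely to convert the hypothesis $H\in\mathcal F_2$ into the statement that these vectors span $\C^{\ell}$. This I would obtain by evaluating the nondegeneracy determinant \eqref{folcon} for the target $\mathbb{H}^{N'}$, where $\rho'_{w'}\equiv\tfrac1{2i}$ and $\rho'_{z'_j}(H,\overline H)=-(\overline H_{z'})_j$: the row with $\iota=0$ contributes $\tfrac1{2i}$ in the $w'$-column and (since $\overline H(0)=0$) zeros elsewhere, and the $N-1$ rows with $|\iota|=1$ contribute $-I_{N-1}$ in the $z'$-columns and zero in the $F$-columns (using that the $z$-part of $H$ is the identity and $\partial_zF(0)=0$). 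Thus $s(0)$ reduces up to sign to an $\ell\times\ell$ minor with entries $L^{\iota}\overline F_k(0)$, $|\iota|=2$, and $s(0)\neq0$ for some admissible choice precisely when these second-order data have full rank $\ell$; by conjugation this means the degree-two parts of $F_1,\dots,F_\ell$ are linearly independent, i.e.\ the degree-two coefficient vectors of $F$ span $\C^{\ell}$. Since $E$ is unitary and fixes a spanning set, $E=I_{\ell}$.

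With $E=I_{\ell}$ in hand the remaining parameters collapse: the order-one relation $E(a+c'_2)=a$ gives $c'_2=0$, hence $c'=0$; the order-two $w^2$-identity then gives $r'=0$; and $\lambda'=1$, $U'=I_{N'-1}$ were already established. Therefore $\gamma'=(1,0,I_{N'-1},0)$ and $\sigma_{\gamma'}=\id$, so the stabilizer of $\Lambda$ in $G'$ is trivial. The only genuinely nonroutine point is the equivalence between $2$-nondegeneracy and the spanning of the second-order coefficients of $F$; the remainder is the bookkeeping of the jet comparison.
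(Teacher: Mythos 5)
Your proof is correct and takes essentially the same route as the paper's: the same block decomposition of $U'$ and $c'$, the same degree-by-degree elimination of $\lambda'$, $c'_1$, $U'$, $c'_2$, $r'$, and the same key step of converting $2$-nondegeneracy, via the determinant \eqref{folcon} with one row of order $0$, $N-1$ rows of order $1$ and $\ell$ rows of order $2$, into the statement that the second-order coefficient vectors of $F$ span $\C^{\ell}$, which forces $E=I_\ell$ and collapses the remaining parameters. The only (harmless) divergence is bookkeeping: you carry the denominator corrections and the intermediate relation $r'=2i\langle\overline{c'_2},a\rangle+i\|c'_2\|^2$ explicitly, and your first-jet identity $E(a+c'_2)=a$ keeps $U'$ acting on $c'$ as the composition formula requires (the paper's corresponding equation $c'_{[N;N'-1]}=(I_\ell-U'')\Lambda^{0,1}_{[N;N'-1]}$ omits that factor), but both versions give $c'_2=0$ once $E=I_\ell$ is established, so the conclusion is unaffected.
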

\begin{proof}
For $v \in \C^m$ we denote by $v_{[j;k]}$ the coordinates $(v_j, \ldots, v_k)$ for $1\leq j \leq k\leq m$. First using that $\Lambda\in E$ we deduce $\Lambda_N^{\alpha,0} = \cdots =\Lambda_{N'-1}^{\alpha,0} =0$ for all $\alpha$ such that $|\alpha|=1$. Indeed for all $\alpha$ with $|\alpha|=1$ we have
\[
1 = \Lambda_{N'}^{0,1} = \|\Lambda'^{\alpha,0}\|^2 = \sum_{j=1}^{N-1} |\Lambda_j^{\alpha,0}|^2 + \sum_{j=N}^{N'-1} |\Lambda_j^{\alpha,0}|^2  = 1 + \sum_{j=N}^{N'-1} |\Lambda_j^{\alpha,0}|^2,
\]
since for every $\alpha$ there exists exactly one $j_{\alpha}$, such that $\Lambda_{j_\alpha}^{\alpha,0} = 1$ and $\Lambda_{j}^{\alpha,0} = 0$ for all $j \neq j_{\alpha}, 1\leq j \leq N-1$ (by the form of $\Lambda$). In other words the entries of the last $\ell$ rows of the $(N-1+\ell)\times (N-1)$ matrix $\Lambda'^{1,0}$ defined in the proof of Lemma \ref{proact2} are all zeros, while the rest of the matrix is an $(N-1)\times (N-1)$ identity matrix.
 
  Let $\Lambda$ be as in the assumptions and $\sigma' \in G'$ in the stabilizer of $\Lambda$, that is
\begin{equation}
\label{eq:free}
\Lambda = \sigma' \circ \Lambda.
\end{equation}
 
Let $\sigma'$ be parametrized by $\gamma' = (\lambda',r',U',c')\in \Gamma'$. We will show that $\sigma' = \id$ by following similar computations analogous to the ones in Lemma \ref{proact}. Looking at the $N'$-th component of the first jet of \eqref{eq:free} we see that $(\lambda')^2 = 1$, hence $\lambda' = 1$ since $\lambda'\in \mathbb R^+$.

Considering the first $N'-1$ components of the first jet of \eqref{eq:free}, we get $U'  \Lambda'^{1,0} =\Lambda'^{1,0}$, hence $U'$ is a block diagonal matrix with first block being $I_{N-1}$  and the second block a $\ell \times \ell$ unitary matrix $U''$. Furthermore, we obtain
\begin{align*}
U' \ {}^t\left( 0, \ldots, 0, \Lambda_{[N;N'-1]}^{0,1} \right) +  c'  ={}^t\left( 0, \ldots, 0, \Lambda_{[N;N'-1]}^{0,1} \right),
\end{align*}
i.e. $c_j' = 0$ for $1\leq j \leq N-1$ and $c'_{[N;N'-1]} = (I_{\ell}-U'')\Lambda_{[N;N'-1]}^{0,1}$.

Since $\Lambda$ comes from a map in $\mathcal F_2$ it follows that there exists a collection of multiindices $\Delta=(\delta_1,\ldots, \delta_{\ell})$ with $|\delta_j|=2$ such that the $\ell\times \ell$-matrix $\Lambda_{[N;N'-1]}^{\Delta,0}$ (whose $(j,k)$-entry is $\Lambda_{j+N-1}^{\delta_k,0}$) is invertible. Indeed in the Definition \ref{defNondeg} we can take the sequence of multiindices $(\iota_1,\ldots, \iota_{N'})$ to be $\iota_1 = 0$, for $2 \leq k \leq N$ the multiindex $\iota_k = (0,\ldots, 0,1,0,\ldots, 0)$ (where the $1$ appears in the $k-1$-th entry) and $\iota_{N+j}=\delta_j$ for $1\leq j \leq \ell$.
With this choice the determinant $s$ at $0$ of the matrix in Definition \ref{defNondeg} is
 \[s(0) = \det \left(\begin{array}{cccccc} 
0 & 0 & \cdots & 0 & 0 & 1 \\ 
1 & 0 & \cdots & 0  &0 & 0 \\ 
0 & 1 & \cdots &  0 & 0 & 0 \\
\vdots & \vdots &\ddots & \vdots & \vdots & \vdots  \\
0 & 0 & \cdots & 1 & 0 & 0 \\
\Lambda_1^{\Delta,0} &  \Lambda_{2}^{\Delta,0} & \cdots &  \Lambda_{N-1}^{\Delta,0} & \Lambda_{[N;N'-1]}^{\Delta,0}  & 0
 \end{array}\right)
 = \pm \det \Lambda_{[N;N'-1]}^{\Delta,0},\]
 so that $ \det \Lambda_{[N;N'-1]}^{\Delta,0} \neq 0$. Using this fact and considering the $[N;N'-1]$ components of the second jet of \eqref{eq:free} we have:
\begin{align*}
U''\Lambda_{[N;N'-1]}^{\Delta} & =\Lambda_{[N;N'-1]}^{\Delta,0}, \  {i.e.}\  U'' = I_{\ell}, c_{[N;N'-1]}' = 0,
\end{align*}
since $\Lambda_{[N;N'-1]}^{\Delta}$ is invertible. Finally taking into account the previous computations the remaining equation in the $2$-jet of the $N'$-th component in \eqref{eq:free} becomes
\begin{align*}
-r' & = \Lambda_{N'}^{0,2} = 0,
\end{align*}
which shows that $\sigma'=\id$. 
\end{proof}

\section{Proofs of the main results}
\label{proof}

In this section we are going to prove Theorems \ref{infTrivial} and \ref{suffcon2intro}.  

\begin{theorem}\label{suffcon1}
Let $M,M'$ be as in Theorem \ref{jetparam}, let $A$ be defined as in (\ref{defas}), $\Lambda_0\in A$, and let $j\in J$ be such that $\Lambda_0 \in A_j$. Suppose that $\dim_{\mathbb R} \mathfrak {hol}_0 (\Phi_j(\Lambda_0)) = 0$, then $\Phi_j(\Lambda_0)$ is isolated in $\mathcal F_{k_0}$.
\end{theorem}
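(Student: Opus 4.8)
The plan is to reduce the assertion to a statement about the jet set $A_j$ and then combine Lemma \ref{dim10} (applied with $\ell = 0$) with the structure theory of real-analytic sets. First I would note that $\mathcal F_{k_0,j} = \mathcal F_{k_0}\cap (j_0^{\mathbf t k_0})^{-1}(U_j)$ is open in $\mathcal F_{k_0}$, since $U_j$ is open and the jet map $j_0^{\mathbf t k_0}$ is continuous; hence a point of $\mathcal F_{k_0,j}$ is isolated in $\mathcal F_{k_0}$ as soon as it is isolated in $\mathcal F_{k_0,j}$. By Lemma \ref{PhiHomo} the map $\Phi_j\colon A_j\to \mathcal F_{k_0,j}$ is a homeomorphism, so $\Phi_j(\Lambda_0)$ is isolated in $\mathcal F_{k_0,j}$ exactly when $\Lambda_0$ is isolated in $A_j$. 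It therefore suffices to show that $\Lambda_0$ is an isolated point of the real-analytic set $A_j$ defined in \eqref{defas}.

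To this end I would invoke Lemma \ref{dim10}: since $\dim_{\mathbb R}\mathfrak{hol}_0(\Phi_j(\Lambda_0)) = 0$, there is a neighborhood $U$ of $\Lambda_0$ in $J_0^{\mathbf t k_0}$ such that every submanifold $X\subset A_j$ with $X\cap U\neq\emptyset$ satisfies $\dim_{\mathbb R}X\le 0$; that is, $A_j$ carries no positive-dimensional submanifold meeting $U$. Now suppose, for contradiction, that $\Lambda_0$ is not isolated in $A_j$. Using a local stratification of the real-analytic set $A_j$ near $\Lambda_0$ into finitely many connected real-analytic submanifolds satisfying the condition of the frontier, the fact that $\Lambda_0$ is a non-isolated point forces either the stratum through $\Lambda_0$ to be positive-dimensional, or $\Lambda_0$ to lie in the frontier of a higher-dimensional stratum; in either case there is a positive-dimensional real-analytic submanifold $X\subset A_j$ meeting every neighborhood of $\Lambda_0$, in particular $X\cap U\neq\emptyset$. (Alternatively, one may apply the curve selection lemma to obtain a non-constant real-analytic arc in $A_j$ issuing from $\Lambda_0$ and restrict it to a subinterval on which it is an embedding, producing a $1$-dimensional such $X$.) This contradicts the conclusion of Lemma \ref{dim10}, so $\Lambda_0$ must be isolated in $A_j$, which by the reduction above is exactly the claim.

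The main obstacle is precisely this last step: converting the purely topological hypothesis ``$\Lambda_0$ is not isolated'' into the geometric statement ``there is a positive-dimensional submanifold of $A_j$ arbitrarily close to $\Lambda_0$''. This is where the real-analyticity of $A_j$ is indispensable, since for a general closed set a non-isolated point need not be approached by any positive-dimensional submanifold; it is the local structure theory (stratification, or equivalently the curve selection lemma) that supplies the submanifold on which to run Lemma \ref{dim10}. Everything else amounts to routine bookkeeping with the homeomorphism $\Phi_j$ and the openness of $\mathcal F_{k_0,j}$ in $\mathcal F_{k_0}$.
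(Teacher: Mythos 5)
Your proposal is correct and follows essentially the same route as the paper: Lemma \ref{dim10} with $\ell=0$ excludes positive-dimensional submanifolds of $A_j$ near $\Lambda_0$, the structure theory of real-analytic sets then forces $U\cap A_j$ to be discrete, and Lemma \ref{PhiHomo} transfers isolatedness to $\mathcal F_{k_0}$. The only difference is that you make explicit two steps the paper leaves implicit (the stratification/curve-selection argument behind ``no positive-dimensional submanifold implies discrete,'' and the openness of $\mathcal F_{k_0,j}$ in $\mathcal F_{k_0}$), which is a welcome clarification rather than a change of method.
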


\begin{proof}
By Lemma \ref{dim10}, there is a neighborhood $U$ of $\Lambda_0$ in $J_0^{\mathbf t k_0}$ such that $U\cap A_j$ does not contain any manifold of positive 
dimension. It follows that $U\cap A_j$ is a discrete set: by Lemma \ref{PhiHomo} we have that $\Phi_j(\Lambda_0)$ is in turn isolated in $\mathcal F_{k_0}$.
\end{proof}

Now we have all the ingredients to give a proof of Theorem \ref{infTrivial}:

\begin{proof}[Proof of Theorem \ref{infTrivial}]
Let $H$ be a finitely nondegenerate map. Then there exists an integer $k_0$ such that $H \in \mathcal F_{k_0}$ (see Definition \ref{defNondeg}). Set $\Lambda_0 = j_0^{\mathbf t k_0} H$, then there exists $j\in J$, such that $\Lambda_0 \in A_j$ and $H = \Phi_j(\Lambda_0)$. By Theorem \ref{suffcon1} $H$ is isolated in $\mathcal F_{k_0}$, but since $\mathcal F_{k_0}$ is an open set of $\mathcal H(M,M')$ (see again Definition \ref{defNondeg}), it follows that $H$ is isolated in $\mathcal H(M,M')$. In particular $H$ is locally rigid by Remark \ref{rem:equcon}.
\end{proof}

Let us now turn to Theorem \ref{suffcon2intro}.
In the setting the jet space in Theorem \ref{jetparam} can be taken to be $J_0^4$, since $k_0 = 2$ and $\mathbf t = 2$. The proof of Theorem \ref{suffcon2intro} follows from the theorem below in a similar way as Theorem \ref{infTrivial} from Theorem \ref{suffcon1}.

\begin{theorem}\label{suffcon2}
Let $M,M'$ be as in Theorem \ref{suffcon2intro} and let $A \subset J_0^4$ be defined as in (\ref{defas}), and let $\Lambda_0\in A$. Let $j \in J$ be such that $\Lambda_0 \in A_j$ and $\dim_{\mathbb R} \mathfrak {hol}_0 (\Phi_j(\Lambda_0)) = \dim_{\mathbb R}\mathfrak{hol}_0(M') = \ell$. Then $\Phi_j(\Lambda_0)$ is locally rigid.
\end{theorem}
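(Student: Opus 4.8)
The plan is to combine the local structure theory developed in Sections \ref{s:infdef} and \ref{s:propiso} with a dimension-counting argument. Set $H_0 = \Phi_j(\Lambda_0)$ and $\ell = \dim_{\mathbb R}\mathfrak{hol}_0(M') = \dim_{\mathbb R}\mathfrak{hol}_0(H_0)$. The key philosophical point, already isolated in Lemma \ref{contain} and Lemma \ref{dim10}, is that $\mathfrak{hol}_0(H_0)$ plays the role of a tangent space to the solution variety $A_j$ at $\Lambda_0$, and that $\dim_{\mathbb R}$ of any submanifold of $A_j$ through $\Lambda_0$ is bounded by $\ell$. The orbit $G'\cdot\Lambda_0$, where $G' = \{\id\}\times{\rm Aut}_0(\mathbb H^{N'})$, is precisely an $\ell$-dimensional piece of $A_j$ that we can get our hands on: its dimension is exactly $\dim_{\mathbb R}{\rm Aut}_0(M') = \ell$ by the freeness established in Lemma \ref{free}. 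So the heuristic is that the $G$-orbit already exhausts the local dimension of $A_j$ at $\Lambda_0$, forcing local rigidity.

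First I would pass to the normalized coordinates described just before Lemma \ref{free}, so that $H_0$ has the form $(z,F(z,w),w)$ and ${\rm Aut}_0(M')\subset{\rm Aut}_0(\mathbb H^{N'})$; this is where the hypothesis that $M'$ is strictly pseudoconvex and that we are in the $2$-nondegenerate setting is used, and it lets us invoke Lemmas \ref{proact}, \ref{proact2}, and \ref{free} directly. Next I would observe that the orbit map $G'\to A_j$, $\sigma'\mapsto j_0^4(\sigma'\circ H_0)$, is an injective immersion by Lemma \ref{free} (trivial stabilizer) together with smoothness of the action, so its image $\mathcal O = G'\cdot\Lambda_0$ is an $\ell$-dimensional submanifold of $A_j$ near $\Lambda_0$. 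Invoking Lemma \ref{dim10}, there is a neighborhood $U$ of $\Lambda_0$ in $J_0^4$ in which every submanifold of $A_j$ has real dimension at most $\ell$. Combining these, $\mathcal O$ is a submanifold of the maximal allowed dimension, and the goal is to argue that it is in fact (a neighborhood of $\Lambda_0$ in) all of $A_j\cap U$.

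The main obstacle is closing the gap between ``every submanifold through $\Lambda_0$ has dimension $\le\ell$'' and ``$A_j$ coincides with the $\ell$-dimensional orbit near $\Lambda_0$.'' The set $A_j$ is only real-analytic, not a priori a manifold, so I cannot simply compare tangent spaces; I need to rule out that $A_j$ has, say, a stratified structure with the orbit sitting inside a strictly larger-dimensional stratum, or several $\ell$-dimensional branches meeting at $\Lambda_0$. The cleanest route, following the proof of Theorem 2 in \cite{dSLR15}, is to use properness: by Lemma \ref{proact2} (in the case $M = \mathbb H^N$) or Lemma \ref{proact} together with compactness of ${\rm Aut}_0(M)$ (when $M\not\cong\mathbb H^N$), the action of the full group $G$ on $E\supset A_j$ is proper, hence the orbits are closed and the orbit space is well-behaved. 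Properness guarantees that the saturation of a neighborhood stays controlled, so that the decomposition of $A_j\cap U$ into $G$-orbits near $\Lambda_0$ cannot produce orbits accumulating at $\Lambda_0$ other than $\mathcal O$ itself; any nearby $\Lambda\in A_j$ must therefore lie on $\mathcal O$, i.e.\ be $G$-equivalent to $\Lambda_0$. Transporting this back through the homeomorphism $\Phi_j$ of Lemma \ref{PhiHomo}, every map in $\mathcal F_2$ close to $H_0$ is $G$-equivalent to $H_0$, which by Remark \ref{rem:equcon} is exactly local rigidity. I expect that making the step ``maximal-dimensional proper orbit is locally the whole analytic set'' fully rigorous—rather than the immersion/dimension bookkeeping, which is routine—is where the real work lies, and that it will follow the corresponding argument in \cite{dSLR15} essentially verbatim, with Lemmas \ref{proact}, \ref{proact2}, \ref{free} supplying the inputs that were specific to the spherical target there.
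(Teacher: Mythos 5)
Your proposal takes essentially the same route as the paper: the paper's proof likewise combines freeness (Lemma \ref{free}) and properness (Lemmas \ref{proact}, \ref{proact2}) of the isotropy action, invokes the local slice theorem for free and proper actions, and then argues exactly as in Theorem 25 of \cite{dSLR15}. The step you flag as ``the real work'' --- showing that the maximal-dimensional orbit locally exhausts the analytic set $A_j$ --- is precisely what the slice theorem, combined with the dimension bound of Lemma \ref{dim10}, supplies in that argument, so your plan contains no gap beyond what the paper itself defers to \cite{dSLR15}.
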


\begin{proof}
The result can be proved using the properness and freeness of the action of isotropies according to the Lemmas \ref{proact}, \ref{proact2} and \ref{free}. This allows to employ the local slice theorem for free and proper actions and the conclusion can be obtained by arguing exactly as in the proof of Theorem 25 in \cite{dSLR15}.
\end{proof}

\begin{bibdiv}
\begin{biblist}

\bib{BER1}{article}{
   author={Baouendi, M. S.},
   author={Ebenfelt, P.},
   author={Rothschild, L. P.},
   title={Algebraicity of holomorphic mappings between real algebraic sets
   in ${\bf C}^n$},
   journal={Acta Math.},
   volume={177},
   date={1996},
   number={2},
   pages={225--273},
   issn={0001-5962},
   review={\MR{1440933 (99b:32030)}},
   doi={10.1007/BF02392622},
}

\bib{BER2}{book}{
   author={Baouendi, M. S.},
   author={Ebenfelt, P.},
   author={Rothschild, L. P.},
   title={Real submanifolds in complex space and their mappings},
   series={Princeton Mathematical Series},
   volume={47},
   publisher={Princeton University Press},
   place={Princeton, NJ},
   date={1999},
   pages={xii+404},
   isbn={0-691-00498-6},
   review={\MR{1668103 (2000b:32066)}},
}

\bib{BER99}{article}{
   author={Baouendi, M. S.},
   author={Ebenfelt, P.},
   author={Rothschild, L. P.},
   title={Rational dependence of smooth and analytic CR mappings on their
   jets},
   journal={Math. Ann.},
   volume={315},
   date={1999},
   number={2},
   pages={205--249},
   issn={0025-5831},
   review={\MR{1721797 (2001b:32075)}},
   doi={10.1007/s002080050365},
}

\bib{BV}{article}{
   author={Beloshapka, V. K.},
   author={Vitushkin, A. G.},
   title={Estimates of the radius of convergence of power series that give
   mappings of analytic hypersurfaces},
   language={Russian},
   journal={Izv. Akad. Nauk SSSR Ser. Mat.},
   volume={45},
   date={1981},
   number={5},
   pages={962--984, 1198},
   issn={0373-2436},
   review={\MR{637612 (83f:32017)}},
}

\bib{CH}{article}{
   author={Cho, Chung-Ki},
   author={Han, Chong-Kyu},
   title={Finiteness of infinitesimal deformations of CR mappings of CR
   manifolds of nondegenerate Levi form},
   journal={J. Korean Math. Soc.},
   volume={39},
   date={2002},
   number={1},
   pages={91--102},
   issn={0304-9914},
   review={\MR{1872584 (2002j:32036)}},
   doi={10.4134/JKMS.2002.39.1.091},
}

\bib{Da}{article}{
   author={D'Angelo, John P.},
   title={Proper holomorphic maps between balls of different dimensions},
   journal={Michigan Math. J.},
   volume={35},
   date={1988},
   number={1},
   pages={83--90},
   issn={0026-2285},
   review={\MR{931941 (89g:32038)}},
   doi={10.1307/mmj/1029003683},
}

\bib{dSLR15}{article}{
  author = {{Della Sala}, Giuseppe and Lamel, Bernhard and Reiter, Michael},
	doi = {10.1090/tran/6885},
	fjournal = {Transactions of the American Mathematical Society},
	issn = {0002-9947},
	journal = {Trans. Amer. Math. Soc.},
	mrclass = {32H02 (32V40)},
	mrnumber = {3695846},
	number = {11},
	pages = {7829?7860},
	title = {Local and infinitesimal rigidity of hypersurface embeddings},
	url = {http://dx.doi.org/10.1090/tran/6885},
	volume = {369},
	year = {2017},
}

\bib{Fa2}{article}{
   author={Faran, James J.},
   title={Maps from the two-ball to the three-ball},
   journal={Invent. Math.},
   volume={68},
   date={1982},
   number={3},
   pages={441--475},
   issn={0020-9910},
   review={\MR{669425 (83k:32038)}},
   doi={10.1007/BF01389412},
}

\bib{Hu}{article}{
   author={Huang, Xiaojun},
   title={On a linearity problem for proper holomorphic maps between balls
   in complex spaces of different dimensions},
   journal={J. Differential Geom.},
   volume={51},
   date={1999},
   number={1},
   pages={13--33},
   issn={0022-040X},
   review={\MR{1703603 (2000e:32020)}},
}

\bib{HJ}{article}{
   author={Huang, Xiaojun},
   author={Ji, Shanyu},
   title={Mapping $\bold B^n$ into $\bold B^{2n-1}$},
   journal={Invent. Math.},
   volume={145},
   date={2001},
   number={2},
   pages={219--250},
   issn={0020-9910},
   review={\MR{1872546 (2002i:32013)}},
   doi={10.1007/s002220100140},
}

\bib{Ji}{article}{
   author={Ji, Shanyu},
   title={A new proof for Faran's theorem on maps between $\Bbb B^2$ and
   $\Bbb B^3$},
   conference={
      title={Recent advances in geometric analysis},
   },
   book={
      series={Adv. Lect. Math. (ALM)},
      volume={11},
      publisher={Int. Press, Somerville, MA},
   },
   date={2010},
   pages={101--127},
   review={\MR{2648940 (2011c:32023)}},
}

\bib{JL}{article}{
   author={Juhlin, Robert},
   author={Lamel, Bernhard},
   title={Automorphism groups of minimal real-analytic CR manifolds},
   journal={J. Eur. Math. Soc. (JEMS)},
   volume={15},
   date={2013},
   number={2},
   pages={509--537},
   issn={1435-9855},
   review={\MR{3017044}},
   doi={10.4171/JEMS/366},
}

\bib{La}{article}{
   author={Lamel, Bernhard},
   title={Holomorphic maps of real submanifolds in complex spaces of
   different dimensions},
   journal={Pacific J. Math.},
   volume={201},
   date={2001},
   number={2},
   pages={357--387},
   issn={0030-8730},
   review={\MR{1875899 (2003e:32066)}},
   doi={10.2140/pjm.2001.201.357},
}

\bib{Le}{article}{
   author={Lebl, Ji{\v{r}}{\'{\i}}},
   title={Normal forms, Hermitian operators, and CR maps of spheres and
   hyperquadrics},
   journal={Michigan Math. J.},
   volume={60},
   date={2011},
   number={3},
   pages={603--628},
   issn={0026-2285},
   review={\MR{2861091}},
   doi={10.1307/mmj/1320763051},
}

 \bib{Re2}{article}{
   author = {Reiter, Michael},
	doi = {10.1007/s12220-015-9594-6},
	fjournal = {Journal of Geometric Analysis},
	issn = {1050-6926},
	journal = {J. Geom. Anal.},
	mrclass = {32H02 (32V30)},
	mrnumber = {3472839},
	number = {2},
	pages = {1370-1414},
	title = {Classification of holomorphic mappings of hyperquadrics from {$\Bbb{C}^2$} to {$\Bbb{C}^3$}},
	url = {http://dx.doi.org/10.1007/s12220-015-9594-6},
	volume = {26},
	year = {2016},
   }  
   
\bib{Re3}{article}{
   author = {Reiter, Michael},
	doi = {10.2140/pjm.2016.280.455},
	fjournal = {Pacific Journal of Mathematics},
	issn = {0030-8730},
	journal = {Pacific J. Math.},
	mrclass = {32H02 (32V30 57S05 57S25 58D19)},
	mrnumber = {3453979},
	number = {2},
	pages = {455-474},
	title = {Topological aspects of holomorphic mappings of hyperquadrics from {$\mathbb{C}^2$} to {$\mathbb{C}^3$}},
	url = {http://dx.doi.org/10.2140/pjm.2016.280.455},
	volume = {280},
	year = {2016},
   }

\bib{We}{article}{
   author={Webster, S. M.},
   title={The rigidity of C-R hypersurfaces in a sphere},
   journal={Indiana Univ. Math. J.},
   volume={28},
   date={1979},
   number={3},
   pages={405--416},
   issn={0022-2518},
   review={\MR{529673 (80d:32022)}},
   doi={10.1512/iumj.1979.28.28027},
}

\end{biblist}
\end{bibdiv}
\end{document}